\newcommand{\calA}{\mathcal{A}}
\newcommand{\calC}{\mathcal{C}}
\newcommand{\calF}{\mathcal{F}}
\newcommand{\calJ}{\mathcal{J}}
\newcommand{\calO}{\mathcal{O}}
\newcommand{\calP}{\mathcal{P}}
\newcommand{\calQ}{\mathcal{Q}}
\newcommand{\ZZ}{\mathbb{Z}}
\newcommand{\RR}{\mathbb{R}}
\newcommand{\CC}{\mathbb{C}}
\newcommand{\eb}{\mathbf{e}}
\newcommand{\Hom}{\operatorname{Hom}}
\def\opn#1#2{\def#1{\operatorname{#2}}} 
\opn\conv{conv} \opn\mut{mut} \opn\GL{GL} \opn\cone{cone}
\newtheorem{thm}{Theorem}[section]
\newtheorem{lemma}[thm]{Lemma}
\newtheorem{prop}[thm]{Proposition}
\theoremstyle{definition}
\newtheorem{defi}[thm]{Definition}
\newtheorem{ex}[thm]{Example}
\newtheorem{nota}[thm]{Notation}
\theoremstyle{remark}
\newtheorem{rem}[thm]{Remark}
\begin{document}

\title{Two poset polytopes are mutation-equivalent}
\author{Akihiro Higashitani} 

\address{Department of Pure and Applied Mathematics, Graduate School of Information Science and Technology, Osaka University, Suita, Osaka 565-0871, Japan}
\email{higashitani@ist.osaka-u.ac.jp}

\subjclass[2010]{52B20; 14M25; 14J33; 06A11} 
\keywords{Lattice polytopes, combinatorial mutation, order polytopes, chain polytopes.}

\maketitle

\begin{abstract} 
The combinatorial mutation $\mut_w(P,F)$ for a lattice polytope $P$ was introduced in the context of mirror symmetry for Fano manifolds in \cite{ACGK}. 
It was also proved in \cite{ACGK} that for a lattice polytope $P \subseteq N_\RR$ containing the origin in its interior, 
the polar duals $P^* \subseteq M_\RR$ and $\mut_w(P,F)^* \subseteq M_\RR$ have the same Ehrhart series. 
For extending this framework, in this paper, we introduce the combinatorial mutation for the Minkowski sum of rational polytopes 
and rational polyhedral pointed cones in $N_\RR$. We can also introduce the combinatorial mutation in the dual side $M_\RR$, 
which we can apply for every rational polytope in $M_\RR$ containing the origin (not necessarily in the interior). 
As an application of this extension of the combinatorial mutation, we prove that 
the chain polytope of a poset $\Pi$ can be obtained by a sequence of the combinatorial mutation in $M_\RR$ from the order polytope of $\Pi$. 
Namely, the order polytope and the chain polytope of the same poset $\Pi$ are mutation-equivalent. 
\end{abstract}

\section{Introduction}

The key notion of the present paper is the \textit{combinatorial mutation} for polyhedra. 
Originally, in \cite{ACGK}, the notion of combinatorial mutation for lattice polytopes was introduced from points of view of mirror symmetry for Fano manifolds. 
Fano manifolds are one of the most important objects in algebraic geometry and their classification is of particular interest. 
The original motivation of the introduction of combinatorial mutation in \cite{ACGK} is to classify Fano manifolds by using it. 
It is expected that every Fano manifold corresponds to a certain Laurent polynomial, called a mirror partner, via mirror symmetry (see \cite{CCGGK}). 
Here, we say that a Laurent polynomial $f \in \CC[x_1^\pm,\ldots,x_n^\pm]$ is a \textit{mirror partner} of an $n$-dimensional Fano manifold $X$ 
if the period $\pi_f$ of $f$ coincides with the quantum period $\hat{G}_X$ of $X$. 
(See, e.g., \cite{ACGK} or \cite{CCGGK} or the references therein for more details.) 
It is also expected that a Fano manifold $X$ admits a toric degeneration $X_P$, 
where $P$ is the Newton polytope of a Laurent polynomial $f$ which is a mirror partner of $X$ and $X_P$ denotes the toric variety associated to the lattice polytope $P$. 
On the other hand, Laurent polynomials having the same period are not unique, 
so it can be thought of there are many toric degenerations for the same Fano manifold. 
For the understanding of the family which gives a toric degeneration for the same Fano manifold, 
the mutation of the Laurent polynomial, which is a birational transformation analogue to a cluster transformation, was introduced (see \cite{ACGK} and \cite{GU}). 
Namely, it was shown in \cite[Lemma 1]{ACGK} that for Laurent polynomials $f,g$, the period of $f$ and that of $g$ are equal if $g$ can be obtained by mutations from $f$. 
The combinatorial mutation for lattice polytopes just rephrases the mutation for Laurent polynomials in terms of their Newton polytopes. 
See Section~\ref{sec:N} for the precise definition of combinatorial mutation, in particular Subsection~\ref{subsec:polytope} for polytopes. 

\smallskip

\begin{nota}
Let us fix our notation used throughout this paper. We employ the usual notation used in the context of toric geometry. 
Let $N \cong \ZZ^d$ be a lattice of rank $d$, let $M=\Hom(N,\ZZ) \cong \ZZ^d$, $N_\RR=N \otimes_\ZZ \RR$ and $M_\RR=M \otimes_\ZZ \RR$. 
We denote the natural pairing between $N_\RR$ and $M_\RR$ by $\langle \cdot, \cdot \rangle : M_\RR \times N_\RR \rightarrow \RR$. 
\end{nota}
Remark that the combinatorial mutation in the sense of \cite{ACGK} is the operation for the lattice polytopes in $N_\RR$ (not in $M_\RR$). 

\smallskip

Some combinatorial invariants on lattice polytopes are preserved under the combinatorial mutation. 
For example, let $P \subseteq N_\RR$ be a lattice polytope containing the origin in its interior, 
and let $P'=\mut_w(P,F) \subseteq N_\RR$ be the lattice polytope obtained from $P$ by a combinatorial mutation with respect to $w$ and $F$. 
Note that $P'$ is again a lattice polytope containing the origin in its interior. 
Then the Ehrhart series of the dual polytopes $P^* \subseteq M_\RR$ coincides with that of $P'^* \subseteq M_\RR$, although those are not unimodularly equivalent in general. 
(See the definition of the polar dual $(-)^*$ in Subsection~\ref{subsec:prepare}.) 
Moreover, we can directly describe $P'^* \subseteq M_\RR$ in terms of the operation in $M_\RR$ from $P^* \subseteq M_\RR$. 
More concretely, we have $P'^*=\varphi_{w,F}(P)$, where $\varphi_{w,F}:M_\RR \rightarrow M_\RR$ is a piecewise-linear map which is a kind of a tropical map (see Definition~\ref{def:tropical}). 

However, this is only the case for the class ``polar duals of lattice polytopes containing the origin in its interior''. 
For example, even if $Q \subseteq M_\RR$ is a lattice polytope, when $Q \subseteq M_\RR$ contains the origin in its boundary, $Q^* \subseteq N_\RR$ becomes a polyhedron, i.e., unbounded. 
The motivation to organize the present paper is to extend this framework of combinatorial mutation for all rational polytopes in $M_\RR$. 
More precisely, we introduce the combinatorial mutation for the class $\calP_N$ (see Subsection~\ref{subsec:prepare}). 
Note that every rational polytope in $M_\RR$ containing the origin (not necessarily in the interior) coincides with the polar dual of some $P \in \calP_N$.

The main result of this paper is an application of the extension of the combinatorial mutation to two poset polytopes, order polytope and chain polytope. 
More precisely, we prove that given a poset $\Pi$, the chain polytope of $\Pi$ can be obtained by certain combinatorial mutations from the order polytope of $\Pi$. 
See Section~\ref{sec:poset} for more details.

A brief organization of the paper is as follows: 
In Section~\ref{sec:N}, in order to introduce the combinatorial mutation for more general objects than lattice polytopes (i.e., the polyhedra in the class $\calP_N$), 
we first recall some fundamental materials from convex geometry, 
and next introduce the combinatorial mutation for rational polytopes and rational polyhedral pointed cones, 
and finally introduce that for $\calP_N$. All of those are the story in $N_\RR$. 
In Section~\ref{sec:M}, we observe what happens in the dual side, i.e., in $M_\RR$. 
This is what we really want to do since the rational polytopes in $M_\RR$ one-to-one corresponds to $P \in \calP_N$ by taking the polar dual. 
In Section~\ref{sec:poset}, as an application of the extension of the combinatorial mutation, 
we prove the combinatorial mutation-equivalence of two poset polytopes. 
In Section~\ref{sec:general}, we mention some kinds of generalizations of the result in Section~\ref{sec:poset}.

Finally, it should be mentioned that according to Alexander Kasprzyk, the authors of \cite{ACGK} also thought of the extension of the framework of combinatorial mutation 
to rational polyhedral cones, and so did Daniel Cavey and Thomas Hall. 
Since the authors of \cite{ACGK} needed to treat the combinatorial mutation only for lattice polytopes in the context of mirror symmetry for Fano varieties, 
they introduced it only for lattice polytopes. 
Although the author heard the overview of this idea from Alexander Kasprzyk, the precise proofs in the present paper are given by the author himself. 

\bigskip

\subsection*{Acknowledgements}
The author thanks Alexander Kasprzyk for valuable lectures and discussions on combinatorial mutations of polytopes 
and thanks Daniel Cavey and Thomas Hall for stimulating discussions with them. 
The author also thanks Naoki Fujita for his helpful comments on the paper. The author could learn a lot from him, especially on cluster mutation. 

The author is partially supported by JSPS Grant-in-Aid for Young Scientists (B) 17K14177. 

\bigskip


\section{Extension of Combinatorial Mutation}\label{sec:N}

Originally, the notion of combinatorial mutation for lattice polytopes was developed by Akhtar--Coates--Galkin--Kasprzyk in \cite{ACGK}. 
In this section, we will extend the framework of combinatorial mutation for lattice polytopes 
to more general class $\calP_N$, which is the Minkowski sums of rational polytopes and rational polyhedral pointed cones. 
For this purpose, we will introduce the combinatorial mutation for rational polytopes and for rational polyhedral pointed cones. 

\subsection{Preparation from convex geometry}\label{subsec:prepare}
Firstly, we recall some fundamental materials from convex geometry used in the sections below. 
See, e.g., \cite{Sch} for the introduction to them. 

We consider the family of polyhedra (not necessarily convex polytopes) which are of the following form: 
\begin{align}\label{calP}
\calP := \left\{ \bigcap_{v \in S} H_{v,\geq -1} \cap \bigcap_{v' \in T}H_{v',\geq 0} \subseteq N_\RR \mid S,T \subseteq M_\RR,\; |S|,|T| < \infty \right\}; 
\end{align}
where $H_{v,\geq k}=\{u \in N_\RR \mid \langle v,u \rangle \geq k\}$ for $v \in M_\RR$ and $k \in \RR$. 
Note that every $P$ in $\calP$ is non-empty since $P$ always contains the origin of $N_\RR$. 
Similarly, we define $\calQ$ by swapping the roles of $N_\RR$ and $M_\RR$. 
Namely, all convex sets in $\calP$ sit in $N_\RR$ and those in $\calQ$ sit in $M_\RR$, but the objects are essentially the same. 
Note that a convex polytope in $N_\RR$ (resp. $M_\RR$) containing its origin belongs to $\calP$ (resp. $\calQ$), 
but a convex polytope not containing the origin does not belong to $\calP$ (resp. $\calQ$) 
since one of the supporting hyperplanes of such polytope is of the form $H_{v, \geq 1}$ for some $v \in M_\RR$ (resp. $v \in N_\RR$). 
However, since the parallel translation preserves most of the information which we will consider, 
we do not lose the generality even if we treat only the objects in $\calP$.

Given $P \in \calP$, we consider the \textit{polar dual} $P^* \subseteq M_\RR$ of $P$ defined as 
$$P^* := \{v \in M_\RR\mid\langle v,u\rangle\ge-1 \text{ for all } u\in P\}\subseteq M_\RR. $$
Note that we can also define $Q^* \subseteq N_\RR$ from $Q \in \calQ$ in the same way as above. 

\begin{prop}[{\cite[Theorem 9.1]{Sch}}]\label{prop:dual}
We have the following statements: 
\begin{itemize}
\item[(i)] $P^* \in \calQ$ for any $P \in \calP$, and vice versa; 
\item[(ii)] $(P^*)^*=P$. 
\end{itemize}
\end{prop}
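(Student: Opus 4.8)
The plan is to treat the two statements separately, using the Minkowski--Weyl structure theorem for (i) and the separation theorem for (ii).

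For (i), I would first apply the Minkowski--Weyl theorem to write an arbitrary $P \in \calP$ as $P = \conv(U) + \cone(W)$ for finite sets $U \subseteq N_\RR$ (vertices of the bounded part) and $W \subseteq N_\RR$ (generators of the recession cone); recall $0 \in P$. Writing a general point of $P$ as $\sum_i \lambda_i u_i + \sum_j \mu_j w_j$ with $\lambda_i \geq 0$, $\sum_i \lambda_i = 1$, $\mu_j \geq 0$, the condition $\langle v, u \rangle \geq -1$ for all $u \in P$ unwinds into two families of inequalities: letting $\mu_j \to \infty$ forces $\langle v, w_j \rangle \geq 0$ for every $w_j \in W$, and given those, the bound reduces to $\langle v, u_i \rangle \geq -1$ for every $u_i \in U$. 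Hence $P^* = \bigcap_{u \in U} H_{u, \geq -1} \cap \bigcap_{w \in W} H_{w, \geq 0}$, now intersecting half-spaces in $M_\RR$, which is exactly the defining form of $\calQ$. The reverse implication ($Q \in \calQ \Rightarrow Q^* \in \calP$) is identical after swapping the roles of $N_\RR$ and $M_\RR$.

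For (ii), the inclusion $P \subseteq (P^*)^*$ is immediate: if $u \in P$, then every $v \in P^*$ satisfies $\langle v, u \rangle \geq -1$ by definition of $P^*$, which is precisely the condition for $u$ to lie in $(P^*)^*$. The reverse inclusion $(P^*)^* \subseteq P$ is where the work lies, and I would argue it by contraposition. Take $u_0 \notin P$ and invoke the separation theorem for the closed convex set $P$ and the point $u_0$, producing $v_0 \in M_\RR$ and $c \in \RR$ with $\langle v_0, u_0 \rangle < c \leq \langle v_0, u \rangle$ for all $u \in P$. Since $0 \in P$, we get $c \leq 0$, and I would split into two cases. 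If $c < 0$, rescaling $v_0$ by $-1/c > 0$ yields a functional lying in $P^*$ on which $u_0$ takes a value strictly less than $-1$, so $u_0 \notin (P^*)^*$.

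The main obstacle, and the case needing an extra idea, is $c = 0$: here $v_0$ lies in the polar cone $\{v \in M_\RR : \langle v, u \rangle \geq 0 \text{ for all } u \in P\}$, which is invariant under positive scaling, so every $t v_0$ with $t > 0$ already lies in $P^*$; since $\langle v_0, u_0 \rangle < 0$, choosing $t$ large enough makes $\langle t v_0, u_0 \rangle < -1$, again certifying $u_0 \notin (P^*)^*$. This scaling trick, exploiting that the homogeneous ($\geq 0$) half-spaces in the definition of $\calP$ and $\calQ$ are cones through the origin, is exactly what lets the separation argument go through even when the separating hyperplane passes through the origin, and it is the one place where the structure of $\calP$, as opposed to polytopes with the origin in their interior, is genuinely used.
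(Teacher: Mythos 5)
Your proof is correct, and it is worth noting where it tracks the paper and where it diverges. For (i) you do exactly what the paper does: the paper's stated ``key'' is the Minkowski--Weyl decomposition $P = P' + C$ (citing \cite[Corollary 7.1b]{Sch}) followed by the same unwinding --- sending the cone coefficients to infinity to force $\langle v, u_j' \rangle \geq 0$, and the convex-combination estimate for sufficiency --- yielding $P^* = \bigcap_i H_{u_i,\geq -1} \cap \bigcap_j H_{u_j',\geq 0}$. For (ii), however, the paper argues more parochially: since $P \in \calP$ is \emph{by definition} presented as $\bigcap_{v \in S} H_{v,\geq -1} \cap \bigcap_{v' \in T} H_{v',\geq 0}$, a point $u \notin P$ must violate one of these finitely many defining inequalities, and the violated functional is already normalized --- if $\langle u, v \rangle < -1$ with $v \in S$ then $v$ itself lies in $P^*$ and certifies $u \notin (P^*)^*$, while if $\langle u, v' \rangle < 0$ with $v' \in T$ then $rv' \in P^*$ for all $r \geq 0$ and one scales up, exactly your $c=0$ trick. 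You instead invoke the separation theorem for the closed convex set $P$, which forces the extra normalization step (rescaling by $-1/c$ when $c<0$) and the case split on $c$. The trade-off: the paper's argument is more elementary and self-contained, needing no analytic separation because the defining half-spaces of $\calP$ serve as separating hyperplanes for free; your argument is strictly more general, proving $(P^*)^* = P$ for \emph{any} closed convex set containing the origin, so polyhedrality is only used in (i). Both are valid, and your identification of the $c=0$ case as the one place where the homogeneous half-spaces of $\calP$ genuinely matter matches the paper's treatment of the $T$-inequalities precisely.
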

We can find the same statement in \cite[Proposition 6.5]{HN} as this proposition, so we omit the proof, 
but let us mention the key for the proof of Proposition~\ref{prop:dual} (i), which we will use later. 
Let $P \in \calP$. Since $P$ is a polyhedron, there exist a convex polytope $P'$ and a polyhedral cone $C$ such that 
$P=P'+C$, where $+$ denotes the Minkowski sum (see \cite[Corollary 7.1b]{Sch}). 
Let $P'=\conv(\{u_1,\ldots,u_p\})$ and let $C=\cone(\{u_1',\ldots,u_q'\})$ for $u_i,u_j' \in N_\RR$. 
Then we know that $P^*$ can be written by 
$$P^*=\bigcap_{i=1}^p H_{u_i,\geq -1} \cap \bigcap_{j=1}^qH_{u_j',\geq 0}.$$

\begin{ex}\label{ex:running1}
Let $d=2$ and let us regard both $N_\RR$ and $M_\RR$ as $\RR^2$. 
Let $$P=H_{(1,0), \geq -1} \cap H_{(0,1), \geq -1} \cap H_{(1,1), \geq -1} \subseteq \RR^2.$$ 
Then $P$ can be written as the Minkowski sum of $P'=\conv((-1,0),(0,-1))$ and $C=\cone((1,0),(0,1))$. 
Thus, $P^*$ can be written as follows: 
$$P^*=H_{(-1,0), \geq -1} \cap H_{(0,-1), \geq -1} \cap H_{(1,0), \geq 0} \cap H_{(0,1), \geq 0} \subseteq \RR^2.$$
Equivalently, we have $P^*=\conv(\{(0,0),(1,0),(0,1),(1,1)\})$. 
\end{ex}

\begin{prop}[{\cite[Corollary 9.1a (iii)]{Sch}}]\label{prop:origin}
Let $Q \in \calQ$. 
Then $Q$ is a convex polytope if and only if $Q^* \in \calP$ is a polyhedron containing the origin in its interior. 
\end{prop}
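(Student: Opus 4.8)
The plan is to build the proof on two facts already available: the involutivity $(Q^*)^*=Q$ from Proposition~\ref{prop:dual}(ii), and the explicit half-space description of the polar dual recorded just after that proposition. Concretely, writing $Q=\conv(\{v_1,\ldots,v_p\})+\cone(\{w_1,\ldots,w_q\})$ for a Minkowski decomposition of the polyhedron $Q$ into a polytope and its recession cone, that description reads
\[
Q^* = \bigcap_{i=1}^p H_{v_i,\geq -1} \cap \bigcap_{j=1}^q H_{w_j,\geq 0}.
\]
The whole statement then becomes a matter of reading off, from this formula, the dictionary between the vanishing of the cone part of $Q$ and the behaviour of $Q^*$ near the origin.

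For the ``only if'' direction, I would assume $Q$ is a convex polytope, so that its recession cone is trivial and we may take $q=0$ above. Then $Q^*$ is cut out by the finitely many inequalities $\langle v_i,u\rangle\geq -1$, each of which holds strictly at the origin since $\langle v_i,\mathbf{0}\rangle = 0 > -1$. Consequently a sufficiently small ball about the origin also satisfies all of them and is contained in $Q^*$; this simultaneously shows that $Q^*$ is full-dimensional and that the origin is one of its interior points.

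For the ``if'' direction I would argue by contraposition. If $Q$ is not a polytope, it is unbounded, so its recession cone is nontrivial and some generator $w_j$ is nonzero. The corresponding homogeneous inequality $\langle w_j,u\rangle\geq 0$ appears among the constraints defining $Q^*$, and its bounding hyperplane passes through the origin; since every neighborhood of the origin meets the open half-space $\langle w_j,u\rangle<0$, the origin cannot lie in the interior of $Q^*$. Alternatively, and more symmetrically, one can argue directly: if the origin is interior to $Q^*$ then $Q^*$ contains a ball of some radius $\varepsilon>0$, and applying $(-)^*$ together with $(Q^*)^*=Q$ shows that $Q$ is contained in the polar dual of that ball, which is bounded; a bounded element of $\calQ$ is a polytope.

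I expect no serious obstacle here, as this is the standard polar-duality correspondence between full-dimensionality at the origin and boundedness. The only points requiring a little care are the bookkeeping that identifies ``bounded'' with ``trivial recession cone'', so that the cone part of the decomposition genuinely disappears, and --- should one use the ball argument --- the remark that boundedness is independent of the auxiliary choice of norm used to speak of balls. The recession-cone formulation sidesteps the latter entirely, so I would likely present that version.
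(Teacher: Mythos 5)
Your proof is correct. Note that the paper does not actually prove this proposition---it is quoted directly from \cite[Corollary 9.1a (iii)]{Sch}---so there is no internal argument to compare against; what the paper does record are exactly the two ingredients you build on, namely the involution $(Q^*)^*=Q$ of Proposition~\ref{prop:dual}(ii) and the half-space description of the polar of a Minkowski sum $\conv(\{v_1,\ldots,v_p\})+\cone(\{w_1,\ldots,w_q\})$, so your argument fits seamlessly into the paper's framework. Both directions go through: in the ``only if'' direction, a bounded $Q$ has trivial recession cone, so $Q^*$ is cut out by the finitely many constraints $\langle v_i,u\rangle\geq -1$, each strict at the origin, and a small ball around the origin survives all of them; in the ``if'' direction, either the contrapositive via a nonzero recession generator $w_j$, which forces $Q^*\subseteq H_{w_j,\geq 0}$ and bars the origin from the interior, or the inclusion-reversal argument $B_\varepsilon\subseteq Q^* \Rightarrow Q=(Q^*)^*\subseteq B_\varepsilon^*$ with $B_\varepsilon^*$ bounded. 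The facts you use without proof are standard and appropriate to cite: that a polyhedron is a polytope iff it is bounded iff its recession cone is $\{{\bf 0}\}$ (this is where identifying the cone $C$ in the decomposition $Q=P'+C$ with the recession cone matters, cf.\ \cite[Section 8.9]{Sch}), and, in the ball variant, that polarity reverses inclusions. Your stated preference for the recession-cone formulation over the ball argument is sensible, since it avoids any discussion of norms and uses only the displayed dual description.
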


We say that a convex polytope $P$ in $N_\RR$ (resp. $M_\RR$) is a \textit{lattice polytope} if all of its vertices are the points in $N$ (resp. $M$), 
and we say that $P$ is a \textit{rational polytope} if there is a positive integer $n$ such that $nP$ is a lattice polytope. 
We call a polyhedral cone $C$ in $N_\RR$ (resp. $M_\RR$) \textit{rational} if the generators of $C$ can be chosen from $N$ (resp. $M$). 
We call a polyhedral cone $C$ \textit{pointed} if $C$ contains no $1$-dimensional linear subspace.

In the following sections, we treat the classes $\calP_N \subseteq \calP$ and $\calQ_M \subseteq \calQ$ which are defined as follows: 
\begin{itemize}
\item Let $\calQ_M$ be the set of full-dimensional rational polytopes in $M_\RR$ containing the origin. 
\item Let $\calP_N=\{Q^* \mid Q \in \calQ_M\}$. 
\end{itemize}
Note that every polyhedron $P \in \calP_N$ contains the origin in its interior by Proposition~\ref{prop:origin}. 
Moreover, for any $P \in \calP_N$, we have the following unique decomposition: 
\begin{equation}\label{eq:decomp}
\begin{split}
&P=P'+C, \\
&\text{where $P'$ is a rational polytope and $C$ is a rational polyhedral pointed cone}. 
\end{split}
\end{equation}
See \cite[Section 8.9]{Sch}. The pointedness follows from the full-dimensional condition for $\calQ_M$ (\cite[Corollary 9.1a (i)]{Sch}).

\subsection{Combinatorial mutation for rational polytopes}\label{subsec:polytope}

For the introduction of combinatorial mutation for $P \in \calP_N$, we first introduce that for \textit{rational} polytopes. 
The idea is the same as \cite[Section 3]{ACGK}, but we slightly modify it.

Let $P \subseteq N_\RR$ be a rational polytope. Take $w \in M$. For $h \in \RR$, we define 
$$H_{w,h}:=\{x \in N_\RR \mid \langle w,x \rangle =h\} \text{ and }P_{w,h}:=P \cap H_{w,h}.$$
In particular, we use the notation $w^\perp = H_{w,0}$. 
Let $V(P)$ denote the set of vertices of $P$. 
When we take the Minkowski sum, we adopt the convention that $A+ \emptyset =\emptyset+A= \emptyset$ for any subset $A \subseteq N_\RR$. 

\begin{defi}[{Combinatorial mutation for rational polytopes, cf. \cite[Definition 5]{ACGK}}]\label{def:mutation}
Let $w \in M$ be primitive and take a lattice polytope $F \subseteq w^\perp$. 
Suppose that the triple $(P,w,F)$ satisfies the following condition: for every $h \leq 0$ (not necessary integer), 
there exists a possibly-empty rational polytope $G_h \subseteq N_\RR$ such that 
\begin{align}\label{eq:condition}
V(P) \cap H_{w,h} \subseteq G_h+(-h)F \subseteq P_{w,h}
\end{align}
holds. Then we define the rational polytope 
$$\mut_w(P,F;\{G_h\}):=\conv\left(\bigcup_{h \leq 0}G_h \cup \bigcup_{h \geq 0}(P_{w,h}+hF)\right) \subseteq N_\RR.$$
The rational polytope $\mut_w(P,F;\{G_h\})$ is called a \textit{combinatorial mutation} of $P$ 
with respect to a \textit{width vector} $w$ and a \textit{factor} $F$. 
\end{defi}

Since we may set $G_h=\emptyset$ when $V(P) \cap H_{w,h} =\emptyset$, we may only consider finitely many rational numbers $h$. 
We may set $G_0=P_{w,0}$. We notice that for any $u \in w^\perp \cap N$, 
we see that $\mut_w(P,F+u;\{G_h+hu\})$ is unimodularly equivalent to $\mut_w(P,F;\{G_h\})$. 
Thus, we usually take $F$ one of whose vertex is the origin. 

\begin{lemma}[{cf. \cite[Lemma 2]{ACGK}}]\label{lem:1}
Let $Q=\mut_w(P,F;\{G_h\})$. Then we have $$\mut_{-w}(Q,F;\{P_{-w,h}\})=P.$$ 
\end{lemma}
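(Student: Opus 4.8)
The plan is to rephrase the right-hand side in terms of $w$-heights and then prove the equality by a double inclusion, the hard direction of which is a slice-by-slice convexity argument. First I would record that $H_{-w,h}=H_{w,-h}$, so $P_{-w,h}=P_{w,-h}$ and $Q_{-w,h}=Q_{w,-h}$; substituting these into the definition and reindexing by $k=-h$ turns the target $R:=\mut_{-w}(Q,F;\{P_{-w,h}\})$ into
$$R=\conv\Bigl(\bigcup_{k\ge 0}P_{w,k}\ \cup\ \bigcup_{k\le 0}\bigl(Q_{w,k}+(-k)F\bigr)\Bigr).$$
Before anything else I would check that this second mutation is legitimate, i.e. that $(Q,-w,F)$ with the choice $G'_h=P_{-w,h}$ satisfies \eqref{eq:condition}. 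In $w$-heights this reads $V(Q)\cap H_{w,k}\subseteq P_{w,k}+kF\subseteq Q_{w,k}$ for $k\ge 0$, and both inclusions are immediate: $P_{w,k}+kF$ is one of the generating sets of $Q$ and lies at height $k$, while every vertex of $Q$ is one of the generators of $Q$, so a vertex at height $k\ge 0$ already lies in $P_{w,k}+kF$.

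For the inclusion $P\subseteq R$ I would only use the defining condition \eqref{eq:condition} for $(P,w,F)$. Since $P=\conv(V(P))$, it suffices to place each vertex in a generating set of $R$: a vertex at height $k\ge 0$ lies in $P_{w,k}$, and a vertex at height $k\le 0$ lies in $V(P)\cap H_{w,k}\subseteq G_k+(-k)F\subseteq Q_{w,k}+(-k)F$, using the trivial $G_k\subseteq Q_{w,k}$. This direction needs no information about the internal structure of $Q$.

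The reverse inclusion $R\subseteq P$ is the crux, and it amounts to showing $Q_{w,k}+(-k)F\subseteq P$ for every $k\le 0$ (the generators $P_{w,k}$ with $k\ge 0$ obviously lie in $P$). By convexity it is enough to check $v+(-k)f\in P$ for each vertex $v$ of the section $Q_{w,k}$ and each $f\in F$. A vertex of $Q_{w,k}$ is either a generator of $Q$ lying on $H_{w,k}$ — then $v\in G_k$ and $v+(-k)f\in G_k+(-k)F\subseteq P_{w,k}\subseteq P$ — or the transversal crossing $v=\lambda a+(1-\lambda)a'$ of an edge $[a,a']$ of $Q$, with $a$ at height $h\le k$ necessarily a generator $a\in G_h$ and $a'$ at height $h'\ge k$. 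The whole difficulty is concentrated here, because at a nonpositive height the section of $Q$ fuses a generator coming from below (which sits inside $P$) with a generator coming from above (an expanded slice $P_{w,h'}+h'F$, which sits outside $P$), so a term-by-term argument fails.

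I would resolve this by exhibiting $v+(-k)f$ as an explicit convex combination of two points of $P$. If $a'\in G_{h'}$ is also a below-generator, the shift $g=f$ works and $v+(-k)f=\lambda\bigl(a+(-h)f\bigr)+(1-\lambda)\bigl(a'+(-h')f\bigr)$ with both summands in $P$ by \eqref{eq:condition}. If $a'=p'+h'f'$ with $p'\in P_{w,h'}$, $f'\in F$, $h'\ge 0$, I would instead take the single element $g=\frac{(k-h)h'}{(h'-k)(-h)}\,f'+\frac{(-k)(h'-h)}{(h'-k)(-h)}\,f$ of $F$; the point of the computation is exactly that these two coefficients are nonnegative and sum to $1$, so that $g\in F$, after which $v+(-k)f=\lambda\bigl(a+(-h)g\bigr)+(1-\lambda)\,p'$ exhibits $v+(-k)f$ as a convex combination of $a+(-h)g\in G_h+(-h)F\subseteq P_{w,h}\subseteq P$ and $p'\in P_{w,h'}\subseteq P$ (the finitely many degenerate cases $-h=0$ or $h'=k$ force $k=0$ and are trivial). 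Combining the two inclusions gives $R=P$. The main obstacle, and the place where the hypothesis \eqref{eq:condition} is really used, is this mixed edge-crossing: verifying that the back-shift by $(-k)F$ exactly compensates the expansion that produced $Q$, which is precisely what the convex-combination identity for $g$ encodes.
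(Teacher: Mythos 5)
Your proof is correct, and its skeleton --- passing from $-w$-heights to $w$-heights, checking that $(Q,-w,F)$ with $G_h'=P_{-w,h}$ satisfies \eqref{eq:condition}, and then proving the two inclusions --- is the same as the paper's. The genuine difference is in the hard inclusion $Q_{w,k}+(-k)F\subseteq P$ for $k\le 0$: the paper compresses it into a single asserted identity, namely that in $\conv\bigl(\bigcup_{h\ge0}P_{w,h}\cup\bigcup_{h\le0}(Q_{w,h}+(-h)F)\bigr)$ one may replace $Q_{w,h}+(-h)F$ by $G_h+(-h)F$, with no justification given. The implicit argument behind that step redistributes the factor across a general convex combination: a point of $Q_{w,h}$ is $\sum_i\lambda_i g_i+\sum_j\mu_j(p_j+h_jf_j)$ with $g_i\in G_{h_i}$, $p_j\in P_{w,h_j}$, and the total factor $(-h)f+\sum_j\mu_jh_jf_j$ is a nonnegative combination of points of $F$ with total weight $\sum_i\lambda_i(-h_i)$, hence equals $\sum_i\lambda_i(-h_i)\bar f$ for a single $\bar f\in F$, which places the point in $\conv\bigl(\bigcup_{h'}P_{w,h'}\bigr)=P$ in one stroke. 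You instead reduce to vertices of the section $Q_{w,k}$ (vertices of $Q$, which at height $k\le0$ lie in $G_k$, or interior points of crossing edges) and treat the mixed edge with an explicit two-point convex combination; I checked your coefficients for $g$: they are nonnegative under $h\le k\le 0\le h'$, sum to $1$ since $(k-h)h'+(-k)(h'-h)=-h(h'-k)$, and the identity $v+(-k)f=\lambda\bigl(a+(-h)g\bigr)+(1-\lambda)p'$ holds with $\lambda=(h'-k)/(h'-h)$, while your degenerate cases $-h=0$ or $h'=k$ do force $k=0$ and are trivial. So your route buys a fully explicit proof of exactly the step the paper leaves unproved (plus an explicit verification of well-definedness, which the paper calls ``straightforward''), at the cost of the case analysis and the standard fact that vertices of a hyperplane section of a polytope come from faces of dimension at most one; the redistribution argument is shorter because it handles arbitrary convex combinations of generators uniformly, but both are elementary and correct.
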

\begin{proof}
Let $P'=\mut_{-w}(Q,F;\{P_{-w,h}\})$. Remark that it is straightforward to check that the triple $(Q,-w,F)$ satisfies the condition \eqref{eq:condition}. Then 
\begin{align*}
P'&=\conv\left(\bigcup_{h \leq 0}P_{-w,h} \cup \bigcup_{h \geq 0}(Q_{-w,h}+hF)\right) 
=\conv\left(\bigcup_{h \geq 0}P_{w,h} \cup \bigcup_{h \leq 0}(Q_{w,h}+(-h)F)\right) \\
&= \conv\left(\bigcup_{h \leq 0}(G_h+(-h)F) \cup \bigcup_{h \geq 0}P_{w,h}\right) 
\subseteq \conv\left(\bigcup_{h \leq 0}P_{w,h} \cup \bigcup_{h \geq 0}P_{w,h}\right) =P. 
\end{align*}
On the other hand, we have 
\begin{align*}
&V(P) \cap H_{w,h} \subseteq P_{w,h} = P_{w,h}' \text{ when }h \geq 0, \text{ and }\\
&V(P) \cap H_{w,h} \subseteq G_h+(-h)F \subseteq P_{w,h}' \text{ when }h \leq 0, 
\end{align*}
so we also obtain $P \subseteq P'$, as required. 
\end{proof}
\begin{lemma}[{cf. \cite[Lemma 3]{ACGK}}]\label{lem:2}
Let $Q=\mut_w(P,F;\{G_h\})$. Then 
\begin{align*}
&V(Q) \subseteq \{v_P+\langle w,v_P \rangle v_F \mid v_P \in P_{w,\langle w,v_P \rangle}, v_F \in V(F)\}, \;\;\text{ and} \\
&V(P) \subseteq \{v_Q-\langle w,v_Q \rangle v_F \mid v_Q \in Q_{w,\langle w,v_Q \rangle}, v_F \in V(F)\}. 
\end{align*}
\end{lemma}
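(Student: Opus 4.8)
The plan is to prove the two vertex containments by analyzing how vertices of each polytope are built from the Minkowski-shearing described in Definition~\ref{def:mutation}. Since the second statement follows from the first applied to the pair $(Q,-w,F)$ via Lemma~\ref{lem:1} (which identifies $\mut_{-w}(Q,F;\{P_{-w,h}\})$ with $P$), I would focus on proving the first containment and then remark that the second is obtained by this symmetry. So the core task is: every vertex $v_Q$ of $Q=\mut_w(P,F;\{G_h\})$ has the form $v_P+\langle w,v_P\rangle v_F$ for some $v_P\in P_{w,\langle w,v_P\rangle}$ and some vertex $v_F\in V(F)$.

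First I would recall that $Q=\conv\left(\bigcup_{h\le 0}G_h\cup\bigcup_{h\ge 0}(P_{w,h}+hF)\right)$, so every point of $Q$ is a convex combination of points coming from the $G_h$ (for $h\le 0$) and from the sheared slices $P_{w,h}+hF$ (for $h\ge 0$). A vertex $v_Q$ of $Q$ cannot be written as a proper convex combination of other points of $Q$, so it must itself lie in one of the generating sets; moreover, being extremal, it must be a vertex of the relevant piece. The plan is to split into the two regimes. For $h\ge 0$: a vertex of the Minkowski sum $P_{w,h}+hF$ has the form $v+hv_F$ where $v$ is a vertex of the slice $P_{w,h}$ and $v_F\in V(F)$ (using the standard fact that vertices of a Minkowski sum are sums of vertices of the summands). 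Since $v$ lies in $H_{w,h}$ we have $h=\langle w,v\rangle$, so $v+hv_F=v+\langle w,v\rangle v_F$ with $v\in P_{w,\langle w,v\rangle}$, matching the asserted form. For $h\le 0$, the condition \eqref{eq:condition} gives $G_h\subseteq P_{w,h}$, so a vertex $v_Q$ lying in $G_h$ actually lies in $P_{w,h}$; here $\langle w,v_Q\rangle=h\le 0$, and I would observe that such a point sits in the slice, so it can be written as $v_P+\langle w,v_P\rangle v_F$ by taking $v_P=v_Q$ together with the vertex $v_F=\mathbf{0}\in V(F)$ (recall $F$ is chosen to have the origin as a vertex, as noted after Definition~\ref{def:mutation}). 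This uniformly places both regimes into the claimed description.

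The main obstacle I anticipate is the bookkeeping around the boundary case $h=0$ and the precise justification that a vertex of $Q$ must be extremal in a single generating piece rather than arising as a genuine convex combination across pieces. The cleanest way to handle this is to use the Minkowski-sum vertex fact carefully: any vertex of a convex hull $\conv(A\cup B)$ belongs to $A\cup B$, and if it belongs to the image $P_{w,h}+hF$ it must be a vertex of that set since an interior-relative or edge point of the summand would be expressible as a convex combination of other points still lying in $Q$. I would verify that $v_F$ being a vertex of $F$ (not merely a point of $F$) is forced by the same extremality argument, since a non-vertex point of $F$ would let us write $hv_F$ as a convex combination, pulling $v_Q$ into the relative interior of a segment inside $Q$.

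Finally, once the first containment is established, the second follows immediately: applying the first containment to $\mut_{-w}(Q,F;\{P_{-w,h}\})$ and invoking Lemma~\ref{lem:1} to identify this mutation with $P$ yields $V(P)\subseteq\{v_Q-\langle w,v_Q\rangle v_F\mid v_Q\in Q_{w,\langle w,v_Q\rangle}, v_F\in V(F)\}$, after rewriting $\langle -w,v_Q\rangle=-\langle w,v_Q\rangle$. I would present this symmetry remark rather than redo the whole extremality analysis in the dual direction.
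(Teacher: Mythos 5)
Your overall architecture matches the paper's proof exactly: prove the first containment by splitting a vertex $v\in V(Q)$ according to the sign of $h=\langle w,v\rangle$, handle $h\geq 0$ via the standard Minkowski-sum vertex decomposition of $P_{w,h}+hF$, and deduce the second containment by applying the first to $(Q,-w,F)$ and invoking Lemma~\ref{lem:1}. The $h\geq 0$ regime and the symmetry step are fine as you present them.

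There is, however, a genuine gap in your $h\leq 0$ case. You assert that condition \eqref{eq:condition} gives $G_h\subseteq P_{w,h}$ and then take $v_F=\mathbf{0}\in V(F)$. Neither step is available in the stated generality: \eqref{eq:condition} only says $G_h+(-h)F\subseteq P_{w,h}$, which implies $G_h\subseteq P_{w,h}$ precisely when $\mathbf{0}\in F$, and $\mathbf{0}\in V(F)$ is only a convention that the paper says it \emph{usually} adopts (justified by the remark that translating $F$ by $u\in w^\perp\cap N$ changes $\mut_w(P,F;\{G_h\})$ by a unimodular shear), not a hypothesis of the lemma. For a factor $F$ not containing the origin, your chosen $v_F=\mathbf{0}$ is not a vertex of $F$ at all, and $G_h$ need not lie in $P$. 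Since the lemma is later applied inside the proofs of Proposition~\ref{prop:G_h} and Proposition~\ref{prop:commutative} with whatever $F$ is given there (and renormalizing $F$ would change $\varphi_{w,F}$), the general-$F$ statement is the one actually needed; if you want to use the normalization you must explicitly carry out the translation reduction, which you do not.

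The fix is short and is exactly what the paper does: for $v\in V(Q)$ with $h=\langle w,v\rangle\leq 0$ one has $v\in V(G_h)$, and then \eqref{eq:condition} gives $v+(-h)v_F\in P_{w,h}$ for \emph{every} $v_F\in V(F)$; setting $v_P:=v+(-h)v_F$ yields $v=v_P+hv_F=v_P+\langle w,v_P\rangle v_F$ (note $\langle w,v_P\rangle=h$ since $F\subseteq w^\perp$), which is the required form with no assumption on the position of $F$. Replacing your origin-vertex argument by this one line closes the gap and makes the rest of your proof coincide with the paper's.
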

\begin{proof}
Since the second inclusion follows from the first one and Lemma~\ref{lem:1}, we only prove the first one. 

Let $v \in V(Q)$ and $h=\langle w,v \rangle$. When $h \geq 0$, we have $v \in V(P_{w,h}+hF)$. 
Thus, there are $v_P \in P_{w,h}$ and $v_F \in V(F)$ such that $v=v_P+hv_F$. 
When $h \leq 0$, we have $v \in V(G_h)$. In particular, $v+(-h)v_F \subseteq P_{w,h}$ for any $v_F \in V(F)$. 
Hence, there are $v_P \in P_{w,h}$ and $v_F \in V(F)$ such that $v+(-h)v_F=v_P$, i.e., $v=v_P+hv_F$, as required. 
\end{proof}
\begin{prop}[{cf. \cite[Proposition 1]{ACGK}}]\label{prop:G_h}
Let $P \subseteq N_\RR$ be a rational polytope. Let $w \in M$ be primitive and let $F \subseteq w^\perp$ be a lattice polytope. 
Suppose that $(P,w,F)$ satisfies \eqref{eq:condition} with $\{G_h\}$ and $\{G_h'\}$. 
Then $\mut_w(P,F;\{G_h\})=\mut_w(P,F;\{G_h'\})$. 
\end{prop}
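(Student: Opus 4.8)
The plan is to fix the two families $\{G_h\}$ and $\{G_h'\}$, set $Q=\mut_w(P,F;\{G_h\})$ and $Q'=\mut_w(P,F;\{G_h'\})$, and prove the two inclusions separately; by symmetry it suffices to treat $Q\subseteq Q'$. The decisive observation is that the ``upper'' generators $\bigcup_{h\ge 0}(P_{w,h}+hF)$ appearing in the definition do not involve the auxiliary polytopes $G_h$ at all, so they are literally common to $Q$ and $Q'$. Consequently, since $Q=\conv(\bigcup_{h\le 0}G_h\cup\bigcup_{h\ge 0}(P_{w,h}+hF))$ and $Q'$ is convex, the inclusion $Q\subseteq Q'$ follows once I show $G_h\subseteq Q'$ for every $h\le 0$. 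The case $h=0$ is immediate, since one may take $G_0\subseteq P_{w,0}=P_{w,0}+0\cdot F$, a shared upper generator. Thus everything reduces to the single claim: for each $h<0$, every point $g\in G_h$ lies in $Q'$.

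To prove this I would use the right-hand inclusion in \eqref{eq:condition}, namely $g+(-h)F\subseteq P_{w,h}$. Picking a vertex $v_F\in V(F)$ yields $p:=g+(-h)v_F\in P_{w,h}\subseteq P$, which I write as a convex combination $p=\sum_i\lambda_i v_i$ of vertices $v_i\in V(P)$; put $h_i=\langle w,v_i\rangle$, so $\sum_i\lambda_i h_i=h$. Each $v_i$ is then pushed into $Q'$: when $h_i\ge 0$ the sheared point $v_i+h_i v_F$ lies in $P_{w,h_i}+h_i F\subseteq Q'$, and when $h_i<0$ the left-hand inclusion of \eqref{eq:condition} for the family $\{G_h'\}$ gives $v_i\in G_{h_i}'+(-h_i)F$, hence a witness $g_i':=v_i+h_i f_i'\in G_{h_i}'\subseteq Q'$ for some $f_i'\in F$. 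In both cases I obtain $q_i\in Q'$ of the form $q_i=v_i+h_i f_i'$ with $f_i'\in F$ (taking $f_i'=v_F$ when $h_i\ge 0$), lying at level $h_i$. The convex combination $\sum_i\lambda_i q_i$ then lies in $Q'$ and, because $F\subseteq w^\perp$, sits at level $h$; a short computation shows it equals $g$ plus the horizontal correction $\sum_{h_i<0}\lambda_i h_i\,(f_i'-v_F)\in w^\perp$.

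The main obstacle is exactly this correction term: the combination recovers $g$ only up to a vector parallel to $F$, because the factor elements $f_i'$ used to pull the lower vertices into $G_{h_i}'$ need not agree with the single $v_F$ chosen at the start. Resolving it is where the full strength of the containment $g+(-h)F\subseteq P_{w,h}$ (rather than one translate $g+(-h)v_F$) must enter: it says $g$ lies in the erosion $P_{w,h}\ominus(-h)F$, which provides the freedom either to choose the $f_i'$ coherently or to average the construction over all $v_F\in V(F)$ so that the corrections cancel. I expect the cleanest route is first to reduce to the maximal admissible choice $G_h=P_{w,h}\ominus(-h)F$ — every admissible $G_h$ is contained in it, so all admissible mutations are sandwiched between the maximal one and a minimal one — and then to prove that each point of this erosion lies in the hull generated by the minimal data $V(P)\cap H_{w,h}$ together with the shared upper generators; geometrically the eroded intermediate slices are forced to be convex combinations of the sheared vertices lying above and below them, so they contribute nothing new to the convex hull. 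Making this cancellation rigorous, and verifying that the relevant factor choices remain admissible in the sense of \eqref{eq:condition}, is the technical heart of the argument.
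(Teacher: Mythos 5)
You have put your finger on the right obstruction, and it is a genuine gap, not a technicality: the correction term $\sum_{h_i<0}\lambda_i h_i\,(f_i'-v_F)\in w^\perp$ cannot be eliminated by either device you propose. The factor elements $f_i'$ witnessing $v_i\in G'_{h_i}+(-h_i)F$ are dictated by the family $\{G_h'\}$, over which you have no control, so there is no reason they can be chosen coherently equal to $v_F$; and averaging over $v_F\in V(F)$ does not help, because the point $p=g+(-h)v_F$ and hence the whole convex combination $\{\lambda_i,v_i\}$ change with $v_F$, so the corrections attached to different choices of $v_F$ do not live in a common decomposition and cannot be made to cancel. Your fallback via the erosion $G_h^{\max}=\{x\in H_{w,h} \mid x+(-h)F\subseteq P_{w,h}\}$ only yields a one-sided sandwich: every admissible family satisfies $G_h\subseteq G_h^{\max}$, so $Q,Q'\subseteq Q^{\max}:=\mut_w(P,F;\{G_h^{\max}\})$, but there is no canonical minimal family, and the statement still needed, namely $G_h^{\max}\subseteq Q'$, is exactly Proposition~\ref{prop:G_h} restated for the pair $(\{G_h^{\max}\},\{G_h'\})$. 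Moreover the geometric claim you invoke for it --- that eroded slices lie in the hull of $V(P)\cap H_{w,h}$ and the shared upper generators --- would not even imply containment in $Q'$, since the points of $V(P)\cap H_{w,h}$ themselves need not lie in $Q'$ (they lie in $G'_h+(-h)F$). So the sketch is circular as written.

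The paper closes precisely this gap by testing membership one linear functional at a time instead of reconstructing $g$ exactly. Suppose $Q\neq Q'$ and pick $q\in V(Q)\setminus Q'$; necessarily $\langle w,q\rangle<0$, since the generators $P_{w,h}+hF$ at levels $h\geq 0$ are common to $Q$ and $Q'$ (your observation). Separate $q$ from $Q'$ by a hyperplane $H_{u,\ell}$ with $\langle u,\cdot\rangle\geq\ell$ on $Q'$ and $\langle u,q\rangle<\ell$. Lemma~\ref{lem:2} applied to $Q'$ transports every $v\in P$ to a point $v_{Q'}-\langle w,v\rangle v_F\in Q'$ with $v_F\in F$, which gives the uniform bound $\langle u,v\rangle\geq \ell-\langle w,v\rangle u_{\min}$ for all $v\in P$ with $\langle w,v\rangle\leq 0$, where $u_{\min}=\min\{\langle u,v_F\rangle\mid v_F\in F\}$; and Lemma~\ref{lem:1} (the involutivity $P=\mut_{-w}(Q,F;\{P_{-w,h}\})$) gives $q-\langle w,q\rangle v_F'\in P$ for the minimizer $v_F'$, whose $u$-value is $\langle u,q\rangle-\langle w,q\rangle u_{\min}<\ell-\langle w,q\rangle u_{\min}$, a contradiction. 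The idea you were missing is that for a \emph{fixed} test functional $u$, choosing the worst-case vertex $v_F'$ with $\langle u,v_F'\rangle=u_{\min}$ makes all the correction terms one-signed, so no cancellation is needed: the indeterminacy modulo $w^\perp$ that blocks your exact reconstruction of $g$ is harmless against a single linear functional. If you want to salvage your two-inclusion scheme, replace ``express $g$ as a combination landing in $Q'$'' by ``verify every supporting inequality of $Q'$ at $g$'' and run your same transport of vertices; that is essentially the paper's proof.
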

\begin{proof}
Let $Q=\mut_w(P,F;\{G_h\})$ and let $Q'=\mut_w(P,F;\{G_h'\})$. Suppose that $Q \neq Q'$. 
Without loss of generality, we may assume that there is $q \in V(Q)$ with $q \not\in Q'$ and $\langle w,q \rangle <0$. 
Then there is a supporting hyperplane $H_{u,\ell}$, where $u \in M$ and $\ell \in \ZZ$, of $Q'$ 
with $\langle u, q' \rangle \geq \ell$ for any $q' \in Q'$ and $\langle u, q \rangle <\ell$. 

By Lemma~\ref{lem:2}, for any $v \in V(P)$, we have $v=v_{Q'}-h v_F$ with $v_{Q'} \in Q_{w,h}'$ and $v_F \in V(F)$, 
where $h=\langle w, v_{Q'} \rangle = \langle w, v \rangle$. 
Hence, we see that for any $v \in P$, we have $v=v_{Q'}-\langle w, v \rangle v_F$ with $v_{Q'} \in Q_{w,h}'$ and $v_F \in F$. 
Moreover, for any $v \in P$, we see that 
\begin{align*}
\langle u, v \rangle&=\langle u, v_{Q'}-\langle w,v \rangle v_F \rangle=\langle u,v_{Q'} \rangle -\langle w,v \rangle \langle u,v_F \rangle 
\geq \begin{cases}
\ell - \langle w,v \rangle u_\text{max}, &\text{if }\langle w,v \rangle \geq 0, \\
\ell - \langle w,v \rangle u_\text{min}, &\text{if }\langle w,v \rangle \leq 0, 
\end{cases}
\end{align*}
where $u_\text{max}=\max\{ \langle u, v_F \rangle \mid v_F \in F\}$ and $u_\text{min}=\min\{ \langle u, v_F \rangle \mid v_F \in F\}$. 

On the other hand, by Lemma~\ref{lem:1}, we have 
$$P=\mut_{-w}(Q,F;\{P_{-w,h}\})=\conv\left(\bigcup_{h \geq 0} P_{w,h} \cup \bigcup_{h \leq 0}(Q_{w,h}+(-h)F)\right),$$ 
so we see that $q-\langle w, q \rangle v_F \subseteq P$ for any $v_F \in F$. 
Take $v_F' \in F$ with $\langle u, v_F' \rangle = u_\text{min}$. Then $q-\langle w, q \rangle v_F' \in P$ and 
$$\langle u, q-\langle w,q \rangle v_F' \rangle < \ell - \langle w, q \rangle u_\text{min},$$
a contradiction. 
\end{proof}

Thanks to Proposition~\ref{prop:G_h}, we use the notation $\mut_w(P,F)$ instead of $\mut_w(P,F;\{G_h\})$.

\subsection{Combinatorial mutation for rational polyhedral pointed cones}

Next, we introduce the combinatorial mutation for \textit{rational polyhedral pointed cones}. 

Let $C=\cone(\{v_1,\ldots,v_m\}) \subseteq N_\RR$ be a rational polyhedral pointed cone, 
where each $v_i \in N$ is primitive and $\{v_1,\ldots,v_m\}$ is a minimal system of generators. Let $V(C)=\{v_1,\ldots,v_m\}$. 
Take $w \in M$. For $h \in \RR$, we set $$C_{w,h}:=C \cap H_{w,h}.$$
\begin{defi}[Combinatorial mutation for rational polyhedral pointed cones]
Take a primitive $w \in M$ satisfying that 
\begin{align}\label{eq:cond_cone}
\text{$C_{w,h}$ is either a convex polytope or the empty set for any $h \neq 0$ and $C_{w,0}=\{{\bf 0}\}$,} 
\end{align}
and take a lattice polytope $F \subseteq w^\perp$. Suppose that the triple $(C,w,F)$ satisfies the following condition: 
for every $h \leq 0$, there exists a possibly-empty rational polytope $G_h$ such that 
\begin{align}\label{eq:condition_cone}
V(C) \cap H_{w,h} \subseteq G_h+(-h)F \subseteq C_{w,h}
\end{align}
holds. Then we define the cone $$\mut_w(C,F):=\cone\left(\bigcup_{h \leq 0}G_h \cup \bigcup_{h \geq 0}(C_{w,h}+hF)\right) \subseteq N_\RR.$$ 
The rational polyhedral pointed cone $\mut_w(C,F)$ is called a \textit{combinatorial mutation} of $C$ with respect to $w$ and $F$. 
\end{defi}
\begin{rem}
The condition \eqref{eq:cond_cone} is equivalent to that 
$\langle w, v_i \rangle > 0$ for any $i=1,\ldots,m$ or $\langle w, v_i \rangle < 0$ for any $i=1,\ldots,m$. 
Thus, we see that one of the following conditions holds; 
\begin{align}
&C_{w,h}=\emptyset\text{ for any }h<0\text{ and }C_{w,h}\text{ is a convex polytope for any $h>0$; or } \label{111} \\
&C_{w,h}=\emptyset\text{ for any }h>0\text{ and }C_{w,h}\text{ is a convex polytope for any $h<0$}. \label{222}
\end{align}
Note that \eqref{111} (resp. \eqref{222}) is equivalent to $\langle w, v_i \rangle > 0$ (resp. $< 0$) for any $i=1,\ldots,m$. 
Moreover, we may only consider at most one $h$ for the condition \eqref{eq:condition_cone}. 
In fact, in the case \eqref{111}, we need to consider no $h$, and in the case \eqref{222}, we may only consider $h=\max\{\langle w, v_i \rangle \mid i=1,\ldots,m\}$ 
since we may set $G_{h'}=\frac{h'}{h}G_h$ for any $h'=\langle w, v_i \rangle$. 
%
These imply that 
\begin{align*}
\mut_w(C,F)=\begin{cases}
\cone(C_{w,1}+F) &\text{ in the case \eqref{111}, or} \\
\cone(G_a) &\text{ in the case \eqref{222},}
\end{cases}
\end{align*}
where $a=\max\{\langle w, v_i \rangle \mid i=1,\ldots,m\}$. 
Hence, we can directly see that $\mut_w(C,F)$ is always a rational polyhedral pointed cone. 
\end{rem}

Since all analogies of Subsection~\ref{subsec:polytope} hold, we can use the notation $\mut_w(C,F)$ instead of $\mut_w(C,F;\{G_h\})$, 
and we have the following for $C'=\mut_w(C,F)$: 
\begin{equation}\label{eq:V(C)}
\begin{split}
&V(C') \subseteq \{v_C+\langle w, v_C \rangle v_F \mid v_C \in C_{w,\langle w, v_C \rangle}, v_F \in V(F)\}, \;\;\text{ and} \\
&V(C) \subseteq \{v_{C'}-\langle w, v_{C'} \rangle v_F \mid v_{C'} \in C'_{w,\langle w, v_{C'} \rangle}, v_F \in V(F)\}. 
\end{split}
\end{equation}

\subsection{Combinatorial mutation for $\calP_N$}

Now, we are in the position to define the combinatorial mutation for any polyhedra in $\calP_N$. 

\begin{defi}[Combinatorial mutation for $P \in \calP_N$]
For $P \in \calP_N$, we write $P=P'+C$ along \eqref{eq:decomp}. Let $w \in M$ be primitive and take a lattice polytope $F \subseteq w^\perp$. 
Suppose that the triple $(P',w,F)$ satisfies the condition \eqref{eq:condition} and $(C,w,F)$ satisfies the conditions \eqref{eq:cond_cone} and \eqref{eq:condition_cone}. 
(In this case, we say that $\mut_w(P,F)$ is \textit{well-defined}.) Then we define 
$$\mut_w(P,F):=\mut_w(Q,F)+\mut_w(C,F).$$
The new polyhedron $\mut_w(P,F) \in \calP_N$ is called a \textit{combinatorial mutation} of $P$ with respect to $w$ and $F$. 
\end{defi}

\begin{ex}[Continuation to Example~\ref{ex:running1}]\label{ex:running2}
Let $P,P',C$ be the same as in Example~\ref{ex:running1}. Note that the decomposition $P=P'+C$ is the one in \eqref{eq:decomp}. 
Let $w=(1,1)$ and $F=\conv((0,0),(1,-1))$. Then $F \subseteq w^\perp$. 
Moreover, by letting $G_{-1}=\{(-1,0)\}$ (``$0$-dimensional lattice polytope''), the condition \eqref{eq:condition} for $P'$ is satisfied, 
and the condition \eqref{eq:condition_cone} for $C$ automatically holds since it is in the case \eqref{222}. 
Now, we apply the combinatorial mutation $\mut_w(P,F)$, i.e., $\mut_w(P',F)$ and $\mut_w(C,F)$. Then 
\begin{align*}
&\mut_w(P',F)=G_{-1} \text{ and }\mut_w(C,F)=\cone(C_{w,1}+F), \\
\text{ so }&\mut_w(P,F)=(-1,0)+\cone((2,-1),(0,1)). 
\end{align*}
See Figure~\ref{fig1}. 
\end{ex}
\begin{figure}[htb!]
\centering
\includegraphics[scale=0.2]{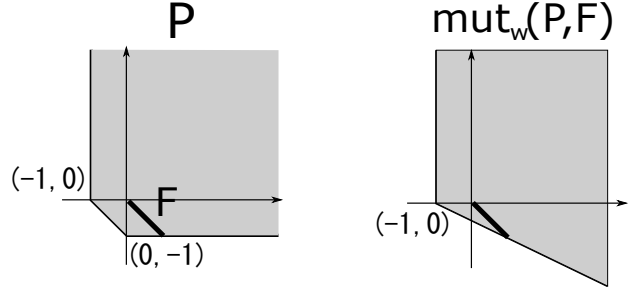}
\caption{Combinatorial mutation $\mut_w(P,F)$ for a polyhedron in $\calP_N$}\label{fig1}
\end{figure}

\bigskip


\section{Dual operation of combinatorial mutation}\label{sec:M}

We have introduced the combinatorial mutation for $P \in \calP_N$, i.e., in $N_\RR$. 
In this section, we introduce a combinatorial mutation in $M_\RR$, which is a kind of a tropical map $\varphi_{w,F}$ (see Definition~\ref{def:tropical}). 
More precisely, we analyze what happens to $P^* \in \calQ_M$ when we apply $\mut_w(P,F)$ for $P \in \calP_N$. 

\begin{defi}[{cf. \cite[Section 3]{ACGK}}]\label{def:tropical}
Fix a primitive lattice point $w \in M$ and a lattice polytope $F \subseteq w^\perp$. We define a map $\varphi_{w,F} : M_\RR \rightarrow M_\RR$ by 
$$\varphi_{w,F}(u):=u - u_\text{min}w \;\;\text{ for }u \in M_\RR,$$ 
where $u_\text{min}:=\min\{\langle u, v \rangle \mid v \in F\}$. 
This is a piecewise-linear map analogous to a tropical cluster mutation. 
\end{defi}
Note that $\varphi_{-w,F} \circ \varphi_{w,F}$ is the identity, i.e., $\varphi_{w,F}^{-1}=\varphi_{-w,F}$. 

As discussed in \cite[Page 12]{ACGK}, the map $\varphi_{w,F}$ can be understood as follows. 
Let $\Delta_F$ be a normal fan of $F$. Then every cone in $\Delta_F$ contains the linear subspace $\RR w \subseteq M_\RR$, so it is never pointed. 
Let $\calF(\Delta_F)$ denote the set of maximal cones in $\Delta_F$. 
Since there is a one-to-one correspondence between $\calF(\Delta_F)$ and $V(F)$, 
we let $v_\sigma \in V(F)$ be the vertex of $F$ corresponding to $\sigma \in \calF(\Delta_F)$. 
Given $\sigma \in \calF(\Delta_F)$, there exists $\varphi_{w,\sigma} \in \GL_d(M)$ such that 
$\varphi_{w,F}=\varphi_{w,\sigma}$ in $-\sigma \subseteq M_\RR$. 
Namely, for $u \in M_\RR$, we have $$\varphi_{w,F}(u)=\varphi_{w,\sigma}(u)=u-\langle u, v_\sigma \rangle w \text{ if }u \in -\sigma.$$ 
Therefore, we can see that for a rational polytope $Q \subseteq M_\RR$, $Q$ and $\varphi_{w,F}(Q)$ have the same Ehrhart quasi-polynomial. 
(See, e.g., \cite{BR} for the introduction to Ehrhart theory.)

As the following proposition shows, we see that $\varphi_{w,F}$ is a ``dual'' operation of $\mut_w(-,F)$. 
\begin{prop}[{cf. \cite{ACGK}, \cite[Proposition 6.6]{HN}}]\label{prop:commutative}
Let $P \in \calP_N$, let $w \in M$ and let $F \subseteq w^\perp$ be a lattice polytope. Assume that $\mut_w(P,F)$ is well-defined. 
Then we have $$ \varphi_{w,F}(P^*)=\mut_w(P,F)^*. $$ 
\end{prop}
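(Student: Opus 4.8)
The plan is to prove the two inclusions $\varphi_{w,F}(P^*) \subseteq \mut_w(P,F)^*$ and $\mut_w(P,F)^* \subseteq \varphi_{w,F}(P^*)$ separately, exploiting the involutivity established around these operations. Since $\varphi_{-w,F} \circ \varphi_{w,F} = \mathrm{id}$ (noted after Definition~\ref{def:tropical}) and since Lemma~\ref{lem:1} together with the cone analogue gives $\mut_{-w}(\mut_w(P,F),F) = P$, the two inclusions are dual to each other: proving one for an arbitrary well-defined triple $(P,w,F)$ automatically yields the reverse inclusion by replacing $w$ with $-w$ and $P$ with $\mut_w(P,F)$, then applying $\varphi_{-w,F}$ to both sides. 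So I would concentrate on showing a single inclusion, say $\varphi_{w,F}(P^*) \subseteq \mut_w(P,F)^*$.

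First I would reduce the polar-dual membership to a pairing computation. Using the explicit description of the polar dual recalled after Proposition~\ref{prop:dual}, namely that $\mut_w(P,F)^* = \bigcap_i H_{u_i,\geq -1} \cap \bigcap_j H_{u_j',\geq 0}$ where the $u_i$ are the vertices of the polytope part and the $u_j'$ the ray generators of the cone part of $\mut_w(P,F) = \mut_w(P',F) + \mut_w(C,F)$, it suffices to check that for $v \in P^*$ the image $\varphi_{w,F}(v)$ pairs to $\geq -1$ against every vertex of $\mut_w(P',F)$ and to $\geq 0$ against every ray generator of $\mut_w(C,F)$. By Lemma~\ref{lem:2} and its cone analogue \eqref{eq:V(C)}, every such generator has the form $v_P + \langle w, v_P\rangle v_F$ (resp.\ $v_C + \langle w, v_C\rangle v_F$) with $v_F \in V(F)$ and $v_P \in P'$ (resp.\ $v_C \in C$) lying in the appropriate slice. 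The computation I expect to drive everything is
\begin{align*}
\langle \varphi_{w,F}(v), v_P + \langle w, v_P\rangle v_F \rangle
&= \langle v - v_{\min} w,\; v_P + \langle w, v_P\rangle v_F \rangle \\
&= \langle v, v_P\rangle + \langle w, v_P\rangle\bigl(\langle v, v_F\rangle - v_{\min}\bigr),
\end{align*}
where $v_{\min} = \min\{\langle v, v_F'\rangle \mid v_F' \in F\}$ and the cross term $\langle w, v_F\rangle$ vanishes because $F \subseteq w^\perp$. The key point is that $\langle v, v_F\rangle - v_{\min} \geq 0$ by definition of $v_{\min}$, so the sign of the second summand matches the sign of $\langle w, v_P\rangle$ — and this is exactly why the slicing into $h \geq 0$ and $h \leq 0$ in Definition~\ref{def:mutation} is set up the way it is.

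The heart of the argument, and the step I expect to be the main obstacle, is controlling the term $\langle v, v_P\rangle$ and combining it correctly across the two sign regimes. For a vertex of $\mut_w(P',F)$ I need $\langle \varphi_{w,F}(v), \cdot\rangle \geq -1$, and from $v \in P^*$ I only directly know $\langle v, v_P\rangle \geq -1$ for the \emph{original} vertices $v_P$ of $P'$. The subtlety is that the generator of $\mut_w(P',F)$ may arise from the slice description through the $G_h$'s in the region $h \leq 0$, where the defining inclusion \eqref{eq:condition} only places the shifted slice inside $P_{w,h}$, not directly among the vertices of $P'$; I would use that $v_P$ lies in a slice $P_{w,h}$ and write it as a convex combination of vertices of $P'$ to transfer the bound $\langle v, v_P\rangle \geq -1$, while checking that the $v_{\min}$-correction never overshoots. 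I would handle the cone part identically but with the bound $\geq 0$ in place of $\geq -1$, using that $F \subseteq w^\perp$ and that the cone slices are governed by \eqref{111}/\eqref{222}. Once both sign regimes and both the polytope and cone parts are verified, the inclusion $\varphi_{w,F}(P^*) \subseteq \mut_w(P,F)^*$ follows, and the reverse inclusion comes for free by the duality remark above, completing the proof.
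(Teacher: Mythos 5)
Your overall architecture is partly sound, and your duality reduction is a genuine (and valid) departure from the paper: the paper proves the inclusion $\mut_w(P,F)^* \subseteq \varphi_{w,F}(P^*)$ directly, by taking $u \in \mut_w(P,F)^*$ lying in a region $-\sigma$ for $\sigma \in \calF(\Delta_F)$ and verifying, via Lemma~\ref{lem:2} and \eqref{eq:V(C)}, that the explicit preimage $u' = \varphi_{w,\sigma}^{-1}(u)$ lies in $P^*$. You instead apply the forward inclusion to the triple $(\mut_w(P,F),-w,F)$ — whose well-definedness is exactly what Lemma~\ref{lem:1} and its cone analogue supply — and then use $\varphi_{w,F}^{-1}=\varphi_{-w,F}$; that is legitimate and arguably cleaner than the paper's hand-built preimages. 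Your setup for the forward inclusion (reduce to pairing $\geq -1$ against $V(\mut_w(P',F))$ and $\geq 0$ against the generators of $\mut_w(C,F)$, then the identity $\langle \varphi_{w,F}(u), v_P+\langle w,v_P\rangle v_F\rangle = \langle u,v_P\rangle + \langle w,v_P\rangle(\langle u,v_F\rangle - u_{\min})$ with vanishing cross term) is precisely the paper's computation in the regime $\langle w, v\rangle \geq 0$.

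The gap is in the regime $h = \langle w, v\rangle < 0$, which you correctly flag as the main obstacle but then misdiagnose. Lemma~\ref{lem:2} hands you only a \emph{single} representation $v = v_P + h v_F$ with one particular $v_F \in V(F)$; plugging it in gives $\langle \varphi_{w,F}(u), v\rangle = \langle u, v_P\rangle + h\bigl(\langle u, v_F\rangle - u_{\min}\bigr)$, and since $h<0$ while $\langle u, v_F\rangle - u_{\min} \geq 0$, the second summand is nonpositive: the estimate runs the wrong way, and the overshoot you worry about genuinely occurs. Your proposed repair — writing $v_P$ as a convex combination of vertices of $P'$ to secure $\langle u, v_P\rangle \geq -1$ — attacks a non-problem: that bound holds for \emph{every} point of $P' \subseteq P$ simply because $u \in P^*$, and it does nothing to cancel the negative second term. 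The missing idea is the extra freedom encoded in the right-hand inclusion of \eqref{eq:condition}: a vertex $v$ of $\mut_w(P',F)$ at height $h<0$ lies in $G_h$ (see the proof of Lemma~\ref{lem:2}), so $G_h + (-h)F \subseteq P'_{w,h}$ gives $v + (-h)v_F \in P'$ for \emph{every} $v_F \in V(F)$, not merely the one appearing in the representation. Choosing $v_F$ attaining $u_{\min}$ (the minimum over $F$ is attained at a vertex) then yields
\begin{equation*}
\langle \varphi_{w,F}(u), v\rangle \;=\; \langle u, v\rangle - u_{\min}\langle w, v\rangle \;=\; \langle u,\; v + (-h)v_F\rangle \;\geq\; -1
\end{equation*}
at once, and the same choice with bound $0$ handles the cone part in case \eqref{222}. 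This is exactly how the paper closes the negative-height case; without the freedom to pick the minimizing vertex of $F$, your sketch does not go through.
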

\begin{proof}
Write $P=P'+C$ as in \eqref{eq:decomp}. Let $Q'=\mut_w(P',F)$, $C'=\mut_w(C,F)$ and let $Q=\mut_w(P,F)=Q'+C'$.

Take $u \in P^*$ arbitrarily. Then we can see that $\langle u, v_P \rangle \geq -1$ for any $v_P \in P'$ since $v_P \in P'+\{{\bf 0}\} \subseteq P$, 
and $\langle u, v_C \rangle \geq 0$ for any $v_C \in C$ since, otherwise, we have $\langle u, v_P+rv_C \rangle < -1$ for some $v_P \in P'$ and sufficiently large $r \in \RR_{>0}$. 
Consider $\varphi_{w,F}(u)=u - u_\text{min}w \in \varphi_{w,F}(P^*)$. 
We will claim that $\langle u - u_{\rm min}w, v \rangle \geq -1$ for any $v \in Q$. Then it suffices to show that 
\begin{align*}
&\langle u - u_{\rm min}w, v \rangle \geq -1 \text{ for any }v \in V(Q') \text{ and }\\
&\langle u - u_{\rm min}w, v \rangle \geq 0 \text{ for any }v \in V(C'). 
\end{align*}

\begin{itemize}
\item Let $v \in V(Q')$. Assume that $\langle w, v \rangle \geq 0$. Then $v=v_P+\langle w,v_P \rangle v_F$ for some $v_P \in P'$ and $v_F \in V(F)$. 
Thus, $$\langle u-u_{\rm min}w, v \rangle = \langle u, v_P \rangle + \langle w, v_P \rangle(\langle u, v_F \rangle-u_{\rm min}) \geq \langle u,  v_P \rangle \geq -1.$$
Assume that $\langle w, v \rangle < 0$. Since $v - \langle w,v \rangle v_F \in P'$ for any $v_F \in V(F)$, we have $\langle u,v - \langle w,v \rangle v_F \rangle =\langle u,v \rangle - \langle u,v_F \rangle \langle w, v \rangle\geq -1$. 
Thus, $$\langle u-u_{\rm min}w, v \rangle = \langle u, v \rangle - u_{\rm min} \langle w, v\rangle \geq -1.$$
\item Let $v \in V(C')$. Assume that $C$ is in the case \eqref{111}. Since we have $v=v_C+\langle w, v_C \rangle v_F$ for some $v_C \in C$ and $v_F \in V(F)$ and $\langle w, v_C \rangle \geq 0$, we obtain that 
$$\langle u-u_{\rm min}w, v \rangle = \langle u, v_C \rangle + \langle w, v_C \rangle(\langle u, v_F \rangle-u_{\rm min}) \geq \langle u, v_C  \rangle \geq 0.$$ 
Assume that $C$ is in the case \eqref{222}. Since $v-\langle w,v \rangle v_F \in C$ for any $v_F \in V(F)$, we have $\langle u,v - \langle w,v \rangle v_F \rangle =\langle u,v \rangle - \langle u,v_F \rangle \langle w, v \rangle\geq 0$. 
Thus, $$\langle u-u_{\rm min}w, v \rangle = \langle u, v \rangle - u_{\rm min} \langle w, v\rangle \geq 0.$$
\end{itemize}

Take $u \in Q^*$. We will claim that there is $u' \in P^*$ such that $u=\varphi_{w,F}(u')$. Let $u \in -\sigma$ for some $\sigma \in \calF(\Delta_F)$. 
Then, by the discussion above, we have $\varphi_{w,F}(u)=\varphi_{w,\sigma}(u)$. 
Here, for any $v_P \in P'$ and $v_C \in C$, we see that 
$$\langle \varphi_{w,\sigma}^{-1}(u), v_P \rangle = \langle u+\langle u, v_\sigma \rangle w, v_P \rangle = \langle u, v_P \rangle +\langle u, v_\sigma \rangle \langle w, v_P \rangle 
=\langle u, v_P+\langle w, v_P \rangle v_\sigma \rangle \geq -1$$
and
$$\langle \varphi_{w,\sigma}^{-1}(u), v_C \rangle =\langle u+\langle u, v_\sigma \rangle w, v_C \rangle = \langle u, v_C \rangle +\langle u, v_\sigma \rangle \langle w, v_C \rangle 
=\langle u, v_C+\langle w, v_C \rangle v_\sigma \rangle \geq 0$$ 
by Lemma~\ref{lem:2} and \eqref{eq:V(C)}, so we have $\langle \varphi_{w,\sigma}^{-1}(u), v \rangle \geq -1$ for any $v \in P=P'+C$. 
Thus, by letting $u'=\varphi_{w,\sigma}^{-1}(u) \in P^*$, we conclude that $\varphi_{w,F}(u')=u$. 
\end{proof}
\begin{ex}[Continuation to Example~\ref{ex:running2}]\label{ex:running3}
Let $P,w,F$ be the same as Example~\ref{ex:running2}. 
Then we have $\mut_w(P,F)=\{(-1,0)\}+\cone((2,-1),(0,1))$ as we see in Example~\ref{ex:running2}. Thus, we obtain that 
$$\mut_w(P,F)^*=H_{(-1,0),\geq -1} \cap H_{(2,-1), \geq 0} \cap H_{(0,1),\geq 0}=\conv(\{(0,0),(1,0),(1,2)\}).$$

On the other hand, let us consider $P^*=\conv(\{(0,0),(1,0),(0,1),(1,1)\})$. 
Here, we see that $\calF(\Delta_F)=\{\sigma_1,\sigma_2\}$, where $\sigma_1=\cone(\pm (1,1),(-1,1))$ and $\sigma_2=\cone(\pm (1,1),(1,-1))$. 
Note that $\sigma_1$ (resp. $\sigma_2$) corresponds to a vertex $(0,0)$ (resp. $(1,-1)$) of $F$. 
Then $\varphi_{w,\sigma_1}(u)=u$ and $\varphi_{w,\sigma_2}(u)=u-\langle u, (1,-1) \rangle w =u\begin{pmatrix} 0 &-1 \\ 1 &2 \end{pmatrix}$. 
Thus, we obtain that 
$$\varphi_{w,F}(P^*)=\conv(\{(0,0),(1,0),(1,2)\}).$$
Note that $\varphi_{w,\sigma_2}((0,1))=(1,2)$. See Figure~\ref{fig2}. 
\begin{figure}[htb!]
\centering
\includegraphics[scale=0.2]{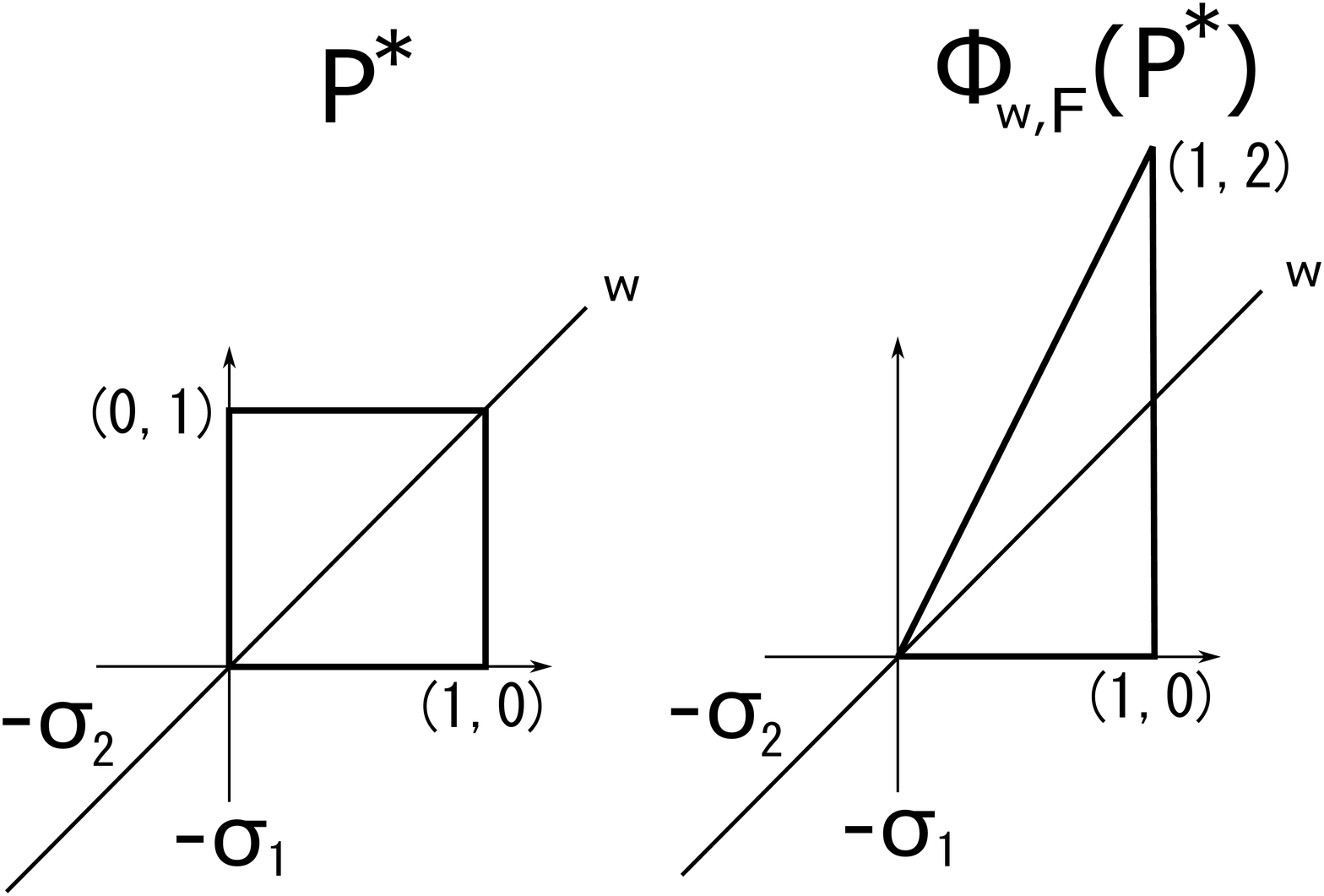}
\caption{Combinatorial mutation $\varphi_{w,F}$ for a rational polytope $P^*$ in $\calQ_M$}\label{fig2}
\end{figure}
\end{ex}

Note that $\varphi_{w,F}(Q)$ is not necessarily convex even if $Q \subseteq M_\RR$ is convex. 
The convexity of $\varphi_{w,F}(Q)$ plays the essential role in $N_\RR$-side. 
\begin{prop}
Fix a primitive vector $w \in M$ and a lattice polytope $F \subseteq w^\perp$. Let $Q \in \calQ_M$. 
Then $\mut_w(Q^*,F)$ is well-defined if and only if $\varphi_{w,F}(Q)$ is convex. 
\end{prop}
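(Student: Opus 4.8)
The plan is to prove the two implications separately; the forward (``only if'') implication is immediate from Proposition~\ref{prop:commutative}, so essentially all of the work lies in the reverse one. For ``only if'', suppose $\mut_w(Q^*,F)$ is well-defined. Since $Q \in \calQ_M$ we have $Q^* \in \calP_N$, so Proposition~\ref{prop:commutative} applies with $P := Q^*$ and yields $\varphi_{w,F}\big((Q^*)^*\big) = \mut_w(Q^*,F)^*$. By Proposition~\ref{prop:dual}(ii) the left-hand side equals $\varphi_{w,F}(Q)$, while the right-hand side is a polar dual, hence an intersection of half-spaces and in particular convex. Thus $\varphi_{w,F}(Q)$ is convex.

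For ``if'', I would argue directly that convexity of $R := \varphi_{w,F}(Q)$ forces the conditions defining well-definedness. First I record that if $R$ is convex then $R \in \calQ_M$: it is bounded because $\varphi_{w,F}$ is a piecewise-linear homeomorphism with inverse $\varphi_{-w,F}$, it is rational and full-dimensional because each branch $\varphi_{w,\sigma}$ lies in $\GL_d(M)$, and it contains the origin because $\varphi_{w,F}$ fixes ${\bf 0}$. Writing $Q^* = P' + C$ as in \eqref{eq:decomp}, I would then pass to the $M_\RR$-picture via the facet description $Q = \bigcap_{v \in V(P')} H_{v,\geq -1} \cap \bigcap_{v' \in V(C)} H_{v',\geq 0}$ recorded after Proposition~\ref{prop:dual}. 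Here the generators of $C$ are exactly the facet normals of $Q$ through the origin, so the tangent cone of $Q$ at ${\bf 0}$ is $\{u \in M_\RR : \langle u, v' \rangle \ge 0 \text{ for all } v' \in V(C)\}$; its image under $\varphi_{w,F}$ is the tangent cone of $R$ at the origin and must be convex, and this is what governs conditions \eqref{eq:cond_cone} and \eqref{eq:condition_cone}, while the remaining facets $H_{v,\geq -1}$ govern condition \eqref{eq:condition}.

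For condition \eqref{eq:condition} the natural choice is the erosion $G_h := \{x \in H_{w,h} : x + (-h)F \subseteq P'_{w,h}\}$, which automatically realizes the right-hand inclusion; the real content is the left-hand inclusion, namely that each vertex $v \in V(P') \cap H_{w,h}$ is covered by a translate $v + (-h)(F - f_0) \subseteq P'_{w,h}$ with $f_0 \in V(F)$. The hard part will be deducing this covering (and its cone analogue) from convexity of $R$, and I expect it to be the main obstacle. My approach is to analyze $\varphi_{w,F}$ fold-by-fold: it is linear on each maximal cone $-\sigma$ of the normal fan $\Delta_F$ and bends across the codimension-one cones, which are dual to the edges of $F$. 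If some vertex $v \in V(P')$ failed the covering condition at level $h = \langle w, v\rangle$, I would expect $R$ to acquire a reflex (non-convex) edge where the two branches $\varphi_{w,\sigma_1}, \varphi_{w,\sigma_2}$ associated to an edge of $F$ meet along the facet $\{u : \langle u, v\rangle = -1\}$ of $Q$; the same mechanism at the origin turns a failure of the sign condition \eqref{eq:cond_cone} into a non-convex tangent cone of $R$. Conversely, when every covering holds the transformed facets meet convexly across each fold. The crux is therefore to set up precisely this local dictionary between ``$(-h)F$ is absorbed into the slice $P'_{w,h}$ at $v$'' and ``$\varphi_{w,F}$ preserves convexity across the corresponding fold'', after which both implications follow by ranging over all vertices of $P'$, all generators of $C$, and all edges of $F$.
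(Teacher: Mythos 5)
Your ``only if'' argument is correct and is exactly the paper's: apply Proposition~\ref{prop:commutative} to $P=Q^*$ and use Proposition~\ref{prop:dual}(ii). The gap is in the ``if'' direction, where you have correctly isolated the hard step but not carried it out: the whole content of the implication is your ``local dictionary'' between absorption of $(-h)F$ into the slices and convexity of $\varphi_{w,F}(Q)$ across the folds, and you explicitly defer it. The erosion choice $G_h=\{x \in H_{w,h} \mid x+(-h)F \subseteq P'_{w,h}\}$ is legitimate (by Proposition~\ref{prop:G_h} any admissible family works), but nothing in the proposal derives the left inclusion of \eqref{eq:condition} from convexity; the reflex-edge picture is a two-dimensional heuristic, and for $d\ge 3$ non-convexity of a piecewise-linear image need not be witnessed along a single codimension-one fold dual to an edge of $F$ meeting a single facet of $Q$. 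Worse, the one piece of the dictionary you state precisely is false: condition \eqref{eq:cond_cone} is \emph{not} governed by convexity of the image of the tangent cone at the origin. Take $d=2$, $w=(1,0)$, $F=\conv(\{(0,0),(0,1)\})\subseteq w^\perp$ and $Q=[0,1]\times[-1,0]$: on the half-space $\{u_2\le 0\}$ containing $Q$ the map $\varphi_{w,F}$ is the single linear map $(u_1,u_2)\mapsto(u_1-u_2,u_2)$, so $\varphi_{w,F}(Q)$ and the image of the tangent cone at the origin are certainly convex; yet the recession cone of $Q^*$ is $\cone((1,0),(0,-1))$, and its generator $(0,-1)$ lies in $w^\perp$, so \eqref{eq:cond_cone} fails. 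The sign condition \eqref{eq:cond_cone} is simply invisible to convexity wherever $\varphi_{w,F}$ happens to be linear on $Q$, so no convexity-based dictionary can produce it; it must be handled separately (the paper's own proof is admittedly terse on exactly this point, disposing of $(C'',-w,F)$ as ``similar'').

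The missing idea, which is how the paper avoids all local analysis, is to verify the mutation conditions not for $(P',w,F)$ head-on but for the \emph{reverse} mutation applied to the dual of the image. Writing $Q=\bigcap_{i} H_{u_i,\ge -k_i}\cap\bigcap_{j} H_{u_j',\ge 0}$, convexity is used exactly once, globally: since $\varphi_{w,F}(H_{u,\ge k})=\bigcup_{\sigma\in\calF(\Delta_F)}\bigl(H_{^t\varphi_{w,\sigma}^{-1}(u),\ge k}\cap(-\sigma)\bigr)$, convexity of $\varphi_{w,F}(Q)$ lets one write it as an intersection of half-spaces with conormals $\widetilde{u}=\!^t\varphi_{w,\sigma}^{-1}(u)=u+\langle w,u\rangle v_\sigma$, whence $\varphi_{w,F}(Q)^*=P''+C''$ with explicitly known vertices and generators. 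Since $\langle w,\widetilde{u}\rangle=\langle w,u\rangle$, a two-line computation then checks that $(P'',-w,F)$ and $(C'',-w,F)$ satisfy \eqref{eq:condition} and \eqref{eq:condition_cone} with the ready-made choice $G_h=P'_{-w,h}$, and Proposition~\ref{prop:commutative} together with involutivity (Lemma~\ref{lem:1}) gives $Q^*=\mut_{-w}(\varphi_{w,F}(Q)^*,F)$, hence the well-definedness of $\mut_w(Q^*,F)$. Without this reversal --- or an actual proof of your covering claim for \eqref{eq:condition}, plus a separate (non-convexity-based) argument for \eqref{eq:cond_cone} --- the proposal does not prove the ``if'' direction.
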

\begin{proof}
\noindent
{\bf ``Only if''}: This directly follows from the commutativity of $\mut_w(-,F), \varphi_{w,F}$ and $(-)^*$ (Proposition~\ref{prop:commutative}).

\noindent
{\bf ``If''}: Write $Q=\bigcap_{i=1}^p H_{u_i,\geq -k_i} \cap \bigcap_{j=1}^q H_{u_j',\geq 0}$, where $u_i,u_j' \in N$ are primitive and $k_i \in \ZZ_{>0}$. 
Let $P=Q^*$. Then $P=P'+C'$, where $P'=\conv(\{\frac{1}{k_1}u_1,\ldots,\frac{1}{k_p}u_p\})$ and $C'=\cone(\{u_1',\ldots,u_q'\})$. 

Here, for $u \in N$ and $k \in \ZZ$, we see that 
$$\varphi_{w,F}\left( H_{u, \geq k} \right)=\bigcup_{\sigma \in \calF(\Delta_F)}(H_{^t\varphi_{w,\sigma}^{-1}(u), \geq k} \cap (-\sigma)),$$
where $\!^t\varphi$ denotes the dual map, and this is not necessarily convex. 
Since $\varphi_{w,F}(Q)$ is convex by our assumption, we see that 
$$\varphi_{w,F}(Q)=\bigcap_{\substack{1 \leq i \leq p, \\ \sigma \in \calF(\Delta_F)}} H_{\widetilde{u_i},\geq -k_i} \cap 
\bigcap_{\substack{1 \leq j \leq q, \\ \sigma \in \calF(\Delta_F)}} H_{\widetilde{u_j'},\geq 0},$$
where $\widetilde{u_i}=\!^t\varphi_{w,\sigma}^{-1}(u_i)$ (resp. $\widetilde{u_j'}=\!^t\varphi_{w,\sigma}^{-1}(u_j')$) 
if it is required for describing $\varphi_{w,F}(Q)$ and we let $\widetilde{u_i}=\widetilde{u_j'}={\bf 0}$ if it is not. 
Thus, we obtain that 
$$\varphi_{w,F}(Q)^*=P''+C'', \text{ where }P''=\conv(\{\widetilde{u_i} \mid \widetilde{u_i} \neq {\bf 0}\}) \text{ and }
C''=\cone(\{\widetilde{u_j'} \mid \widetilde{u_j'} \neq {\bf 0}\}).$$ 

Now, it is enough to prove that $(P'',-w,F)$ and $(C'',-w,F)$ satisfy the condition \eqref{eq:condition} and \eqref{eq:condition_cone}, respectively. 
Once we will prove them, we see from Proposition~\ref{prop:commutative} that $P=\mut_{-w}(\varphi_{w,F}(Q)^*,F)$, 
and the well-definedness of $\mut_w(P,F)$ also automatically follows from the involutivity of $\mut_w(-,F)$. 
Since the case $(C'',-w,F)$ is similar, we prove the case $(P'',-w,F)$, i.e., we prove that 
\begin{align}\label{eq:goal}
V(P'') \cap H_{-w,h} \subseteq P_{-w,h}'+(-h)F \subseteq P''_{-w,h} \;\text{ for any $h \leq 0$.}
\end{align}

Here, for any $u \in N_\RR$ and $x \in M_\RR$, we see that 
$$\langle x, \;^t\varphi_{w,\sigma}^{-1}(u) \rangle = \langle \varphi_{w,\sigma}^{-1}(x), u \rangle = \langle x+\langle x, v_\sigma \rangle w, u \rangle
=\langle x,u \rangle+\langle x, v_\sigma \rangle \langle w, u \rangle=\langle x, u +\langle w, u \rangle v_\sigma \rangle,$$
which means that $\!^t\varphi_{w,\sigma}^{-1}(u)=u +\langle w,u \rangle v_\sigma$. Moreover, we have that 
$$\langle w, \widetilde{u_i} \rangle = \langle w, \;^t\varphi_{w,\sigma}^{-1}(u_i) \rangle 
= \langle w, u_i+\langle w,u_i \rangle v_\sigma \rangle = \langle w, u_i \rangle$$
since $v_\sigma \in F \subseteq w^\perp$.

We prove the left inclusion of \eqref{eq:goal}. For $v \in V(P'')$, we have $v=\widetilde{u_i}$ for some $u_i$ and $\sigma \in \calF(\Delta_F)$. 
Then $v= \!^t\varphi_{w,\sigma}^{-1}(u_i)=u_i+\langle w, u_i \rangle v_\sigma$. 
Let $h=\langle -w, v \rangle \leq 0$. Then $\langle w, u_i \rangle =-h \geq 0$. 
Since $u_i \in P_{-w,h}=P_{w,-h}$ and $v_\sigma \in F$, we obtain that $v \in P_{-w,h} +(-h)F$. 
For the right inclusion of \eqref{eq:goal}, take $u \in P_{-w,h}'$ and $v_F \in V(F)$ and consider $u+(-h)v_F$ for $h \leq 0$. 
Notice that $\langle w,u \rangle = -h$. Then $u+(-h)v_F=\!^t\varphi_{w,\sigma}^{-1}(u)$ for some $\sigma \in \calF(\Delta_F)$, where $v_F=v_\sigma$. 
Hence, $u+(-h)v_F \in P_{-w,h}''$ holds, as required. 
\end{proof}

Now, we introduce the notion \textit{combinatorially mutation-equivalent}. 
\begin{defi}[Combinatorially mutation-equivalent]
Given two polyhedra $P,P' \in \calP_N$ (resp. rational polytopes $Q,Q' \in \calQ_M$), we write $P \sim_N P'$ (resp. $Q \sim_M Q'$) 
if $P'=\mut_w(P,F)$ (resp. $Q'=\varphi_{w,F}(Q)$) with certain $w \in M$ and $F \subseteq w^\perp$. 

We say that two polyhedra $P, P' \in \calP_N$ are \textit{combinatorially mutation-equivalent in $N_\RR$} 
if there exist polyhedra $P=P_0,P_1,\ldots,P_m=P' \in \calP_N$ such that $P_0 \sim_N P_1 \sim_N \cdots \sim_N P_m$. 

Similarly, we say that two rational polytopes $Q,Q' \in \calQ_M$ are \textit{combinatorially mutation-equivalent in $M_\RR$} 
if there exist rational polytopes $Q=Q_0,Q_1,\ldots,Q_\ell=Q' \in \calQ_M$ such that $Q_0 \sim_M Q_1 \sim_M \cdots \sim_M Q_\ell$. 
\end{defi}

Thanks to Proposition~\ref{prop:commutative}, for two polyhedra $P,P' \in \calP_N$, $P$ and $P'$ are combinatorially mutation-equivalent in $N_\RR$ 
if and only if $P^*$ and $P'^*$ are combinatorially mutation-equivalent in $M_\RR$. 
Similarly, for two rational polytopes $Q,Q' \in \calQ_M$, $Q$ and $Q'$ are combinatorially mutation-equivalent in $M_\RR$ 
if and only if $Q^*$ and $Q'^*$ are combinatorially mutation-equivalent in $N_\RR$.

\bigskip

\section{Combinatorial mutation-equivalence of two poset polytopes}\label{sec:poset}

The goal of this section is to prove the combinatorial mutation-equivalence of two poset polytopes (order polytopes and chain polytopes). 

Recall the fundamental things on order polytopes and chain polytopes. Those were originally introduced in \cite{S86}. 
Let $\Pi$ be a poset equipped with a partial order $\prec$. 
\begin{itemize}
\item For $p,q \in \Pi$, we say that $p$ {\em covers} $q$ if $q \prec p$ and there is no $p'$ with $p' \neq p,p'\neq q$ and $q \prec p' \prec p$. 
\item We say that $\alpha \subseteq \Pi$ is a {\em poset filter} if $\alpha$ satisfies the following condition: 
$$p \in \alpha \text{ and }p \prec q \;\Longrightarrow q \in \alpha. $$
Note that an empty set is regarded as a poset filter. 
\item We say that $A \subseteq \Pi$ is an {\em antichain} if $p \not\prec q$ and $q \not\prec p$ hold for any $p,q \in A$ with $p \neq q$. 
Note that an empty set is regarded as an antichain. 
\end{itemize}

We define two poset polytopes $\calO(\Pi)$ and $\calC(\Pi)$ as follows: 
\begin{align*}
\calO(\Pi)&=\{ (x_p)_{p \in \Pi} \in \RR^\Pi \mid x_p \leq x_q \text{ if }p \preceq q \text{ in }\Pi, \; 0 \leq x_p \leq 1 \text{ for }\forall p \in \Pi\}; \\
\calC(\Pi)&=\{ (x_p)_{p \in \Pi} \in \RR^\Pi \mid 
x_{p_{i_1}}+\cdots+x_{p_{i_\ell}} \leq 1 \text{ if }p_{i_1} \prec \cdots \prec p_{i_\ell} \text{ in }\Pi, \; x_p \geq 0 \text{ for }\forall p \in \Pi\}. 
\end{align*}
The convex polytope $\calO(\Pi)$ is called the {\em order polytope} of $\Pi$, and $\calC(\Pi)$ is called the {\em chain polytope} of $\Pi$. 
It is known that both $\calO(\Pi)$ and $\calC(\Pi)$ are $(0,1)$-polytopes, i.e., all vertices are $(0,1)$-vectors. 
The vertices of $\calO(\Pi)$ (resp. $\calC(\Pi)$) one-to-one correspond to the poset filters (resp. the antichains) of $\Pi$. 
More precisely, a $(0,1)$-vector $(x_p)_{p \in \Pi}$ is a vertex of $\calO(\Pi)$ (resp. $\calQ(\Pi)$) 
if and only if $\{p \mid x_p =1\}$ is a poset filter (resp. an antichain) of $\Pi$. 
Hence, both $\calO(\Pi)$ and $\calC(\Pi)$ contain the origin in their boundaries. 

Notice that the poset filters one-to-one correspond to the antichains. 
Thus, the number of vertices of $\calO(\Pi)$ is equal to that of $\calC(\Pi)$. 
Since those are $(0,1)$-polytopes, we conclude that $\sharp (\calO(\Pi) \cap \ZZ^\Pi)=\sharp (\calC(\Pi) \cap \ZZ^\Pi)$. 
Remark that $\calO(\Pi)$ and $\calC(\Pi)$ are not necessarily unimodular-equivalent. 
(The necessary and sufficient condition for $\calO(\Pi)$ and $\calC(\Pi)$ to be unimodularly equivalent is provided in \cite[Theorem 2.1]{HL}.) 
Moreover, it is proved in \cite{S86} that $\sharp (m\calO(\Pi) \cap \ZZ^\Pi)=\sharp (m\calC(\Pi) \cap \ZZ^\Pi)$ holds for any positive integer $m$. 
This is proved by using the \textit{transfer map} $\phi$, which is the map $\phi:\calO(\Pi) \rightarrow \calC(\Pi)$ defined as follows: 
$$\phi((x_p)_{p \in \Pi})=(\phi(x_p))_{p \in \Pi}, \text{ where }\phi(x_p):=\min\{x_p-x_{p'} \mid \text{$p'$ is covered by $p$} \}.$$
This $\phi$ gives a bijection between $\calO(\Pi)$ and $\calC(\Pi)$ (\cite[Theorem 3.2]{S86}). 

In what follows, we regard $\calO(\Pi)$ and $\calC(\Pi)$ as rational polytopes in $M_\RR$. 
In particular, $\calO(\Pi) \in \calQ_M$ and $\calC(\Pi) \in \calQ_M$. The following is the main theorem of the paper. 
\begin{thm}\label{main}
The transfer map $\phi$ can be described as the composition of $\varphi_{w,F}$'s for some $w$'s and $F$'s. 
In particular, $\calO(\Pi)$ and $\calC(\Pi)$ are combinatorially mutation-equivalent in $M_\RR$. 
\end{thm}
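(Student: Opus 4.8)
The plan is to exhibit $\phi$ as a composition of maps $\varphi_{w,F}$ indexed by the elements of $\Pi$, processed from the top of the poset downward, and then to deduce the mutation-equivalence from Proposition~\ref{prop:commutative} together with a convexity check at each step.

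First I would fix coordinates: identify $M_\RR=\RR^\Pi$ with standard basis $\{e_p\}_{p\in\Pi}\subseteq M$, and let $\{e_p^\vee\}_{p\in\Pi}\subseteq N$ be the dual basis, so that $\langle(x_r)_r,e_q^\vee\rangle=x_q$. For a non-minimal $p$ write $C(p):=\{q\mid p\text{ covers }q\}$ and set
$$w_p:=-e_p\in M,\qquad F_p:=\conv\{-e_q^\vee\mid q\in C(p)\}\subseteq N_\RR.$$
Each $q\in C(p)$ is distinct from $p$, so $-e_q^\vee\in w_p^\perp$ and $F_p$ is a lattice polytope in $w_p^\perp$, while $w_p$ is primitive. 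Unwinding Definition~\ref{def:tropical}, for $u=(x_r)_r$ one has $u_\text{min}=\min_{q\in C(p)}\langle u,-e_q^\vee\rangle=-\max_{q\in C(p)}x_q$, hence
$$\varphi_{w_p,F_p}(u)=u-u_\text{min}w_p=u-\Bigl(\max_{q\in C(p)}x_q\Bigr)e_p,$$
i.e. $\varphi_{w_p,F_p}$ subtracts $\max_{q\in C(p)}x_q$ from the $p$-th coordinate and fixes all others; for minimal $p$ I take $F_p=\{\mathbf{0}\}$, so that $\varphi_{w_p,F_p}$ is the identity. Next I would enumerate $\Pi=\{p_1,\dots,p_n\}$ by a \emph{reverse} linear extension (so $p_i\prec p_j\Rightarrow i>j$, maximal elements first) and set $\Phi:=\varphi_{w_{p_n},F_{p_n}}\circ\cdots\circ\varphi_{w_{p_1},F_{p_1}}$. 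To see $\Phi=\phi$, note that $\varphi_{w_{p_i},F_{p_i}}$ changes only the $p_i$-th coordinate, using the current values of $\{x_q\mid q\in C(p_i)\}$; since every $q\in C(p_i)$ satisfies $q\prec p_i$, it is enumerated after $p_i$ and is untouched by $\varphi_{w_{p_1},F_{p_1}},\dots,\varphi_{w_{p_{i-1}},F_{p_{i-1}}}$, so these coordinates still carry their original values $x_q$ when $\varphi_{w_{p_i},F_{p_i}}$ acts. Hence the $p_i$-th coordinate of $\Phi(x)$ is $x_{p_i}-\max_{q\in C(p_i)}x_q=\phi(x)_{p_i}$, and as each coordinate is modified exactly once, $\Phi=\phi$. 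This proves the first assertion.

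For the ``in particular'' I must check that each partial image $Q_i:=(\varphi_{w_{p_i},F_{p_i}}\circ\cdots\circ\varphi_{w_{p_1},F_{p_1}})(\calO(\Pi))$ lies in $\calQ_M$. Since $Q_0=\calO(\Pi)$, $Q_n=\calC(\Pi)$, the origin is fixed by every $\varphi_{w_p,F_p}$, and each map is a lattice piecewise-linear isomorphism onto its image, full-dimensionality, rationality and containment of the origin are immediate; the only real issue is that every $Q_i$ be \emph{convex}. I would prove this by induction on $i$, maintaining an explicit inequality description of $Q_i$ obtained by transporting the facet inequalities of $\calO(\Pi)$. The crucial structural point is that, when $p:=p_{i+1}$ is processed, the only facets of $Q_i$ bounding the (still original) coordinate $x_p$ from below are the images of the covering inequalities $x_q\le x_p$ with $q\in C(p)$: writing $y_p$ for the new $p$-th coordinate, so that $x_p=y_p+\max_{q\in C(p)}x_q$, each of these collapses to the single convex inequality $y_p\ge0$ (it is redundant because $x_q\le\max_{q'\in C(p)}x_{q'}$), whereas every other facet constrains $x_p$, if at all, only from above, so the substitution places the term $\max_{q\in C(p)}x_q$ on the smaller side of a ``$\le$'' and splits it into the finite conjunction of parallel inequalities obtained by replacing $\max_{q}x_q$ by each $x_q$, $q\in C(p)$ — the convexity-preserving operation. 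Thus no reflex fold appears and $Q_{i+1}$ is convex; by the proposition characterizing convexity of $\varphi_{w,F}(Q)$ (equivalently, well-definedness of $\mut_w(Q^*,F)$) each step is then a genuine mutation, giving $\calO(\Pi)=Q_0\sim_M\cdots\sim_M Q_n=\calC(\Pi)$.

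The hard part will be exactly this convexity bookkeeping: because $\varphi_{w,F}$ need not preserve convexity in general (as the text stresses), convexity has to be extracted from the poset structure, namely from the fact that the elements maximized over when processing $p$ are precisely those covered by $p$, whose inequalities $x_q\le x_p$ are still present in $Q_i$ and neutralize the only potentially reflex constraints. The delicate point is to verify that these really are the only lower bounds on $x_p$ in $Q_i$ — in particular that processing the elements above $p$ (which substitute $x_p$ inside their own maxima, turning each such covering relation into a constraint of the form $y_r\ge0$) never manufactures a new lower constraint on $x_p$ — which is what forces the inductive inequality description to be tracked with care.
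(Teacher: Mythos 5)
Your proposal is correct, and its skeleton is exactly the paper's: you pick the identical data $w_p=-\eb_p$ and $F_p=\conv(\{-\eb_q \mid q \text{ covered by } p\})$ (the paper writes both sides in the $\eb$ basis; your dual-basis care and your identity maps $F_p=\{\mathbf{0}\}$ at minimal elements are cosmetic), you compose in the same top-to-bottom order along a reverse linear extension, and your verification that the composition equals the transfer map $\phi$ --- the one-coordinate computation $\varphi_{w_p,F_p}(u)=u-\bigl(\max_{q\in C(p)}x_q\bigr)\eb_p$ plus the observation that covered coordinates are still untouched when $p$ is processed --- reproduces steps (i) and (ii) of the paper's proof. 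The genuine divergence is step (iii), convexity of the intermediate images. The paper argues on the vertex side: along the chosen order, each vertex of $\calO(\Pi)$ keeps satisfying, for every cover $p'\lessdot p$, one of $x_p=x_{p'}=1$, $(x_p,x_{p'})=(1,0)$, $x_p=x_{p'}=0$, so vertex images remain $(0,1)$-vectors, and convexity is asserted from this. You argue on the inequality side: you track an H-description and note that the substitution $x_p=y_p+\max_{q\in C(p)}x_q$ collapses the cover inequalities $x_q\le x_p$ to the single constraint $y_p\ge 0$, while every other constraint sees the max on the harmless side of a ``$\le$'' and splits into a finite conjunction of linear inequalities. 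The delicate point you flag --- that earlier steps never manufacture a new lower bound on $x_p$ --- does hold and closes your induction: the processed set is always a poset filter, and inductively the description consists of $y_r\ge 0$ for processed $r$, the original order-polytope inequalities among unprocessed elements, and chain inequalities $x_s+y_{r_1}+\cdots+y_{r_k}\le 1$ along saturated chains climbing from an unprocessed $s$ through processed elements to a maximal one, in which $x_p$ occurs only with an upper bound. As for what each approach buys: the paper's vertex argument is much shorter, but its final inference (``vertex images are $(0,1)$-vectors, hence convexity follows'') is left to the reader --- the image of a polytope under a piecewise-linear map can have extreme points that are not images of vertices, so something like your bookkeeping is needed to make it airtight --- whereas your H-description induction is self-contained, and as a bonus outputs the facet description of every intermediate polytope, terminating exactly in the defining inequalities of $\calC(\Pi)$.
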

\begin{proof}
For each $p \in \Pi$ which is not minimal in $\Pi$, 
let $w_p:=-\eb_p$, $F_p=\conv(\{-\eb_{p'} \mid p' \text{ is covered by }p\})$ and let $\varphi_p:=\varphi_{w_p,F_p}$, 
where $\eb_p$ for $p \in \Pi$ denotes the $p$-th unit vector of $\RR^\Pi$. 
Our proof consists of three steps: 
\begin{itemize}
\item[(i)] For each $q \in \Pi$, we have 
$$\varphi_q((x_p)_{p \in \Pi})=(x_p')_{p \in \Pi}, \text{ where }x_p'=\begin{cases} \min\{x_p-x_{p'} \mid \text{$p'$ is covered by $p$}\}, &\text{if $p=q$}, \\ x_p, &\text{otherwise}. \end{cases}$$ 
\item[(ii)] We see that 
$$\prod_{q \in \Pi}\varphi_q=\phi,$$ 
where $\prod$ stands for the composition of $\varphi_q$'s and an order of its compositions should be ``from top to bottom'', more precisely, 
$\varphi_{p'}$ appears after $\varphi_p$ if and only if $p' \prec p$. 
\item[(iii)] For each intermediate step of the composition $\prod_{q \in \Pi}\varphi_q$, the image of $\calO(\Pi)$ is always convex. 
\end{itemize}

\smallskip

First, we prove (i). By definition of $\varphi_q$ (see Definition~\ref{def:tropical}), we see that 
\begin{align*}
\varphi_q((x_p)_{p \in \Pi})&=(x_p)_{p \in \Pi}-\min\{\langle (x_p)_{p \in \Pi}, v \rangle \mid v \in F_q\}w_q \\
&=(x_p)_{p \in \Pi}+\min\{-x_{p'} \mid \text{$p'$ is covered by $q$}\}\eb_q \\
&=\begin{cases} \min\{x_p-x_{p'} \mid \text{$p'$ is covered by $p$}\}, &\text{if $p=q$}, \\ x_p, &\text{otherwise}. \end{cases}
\end{align*}

Next, the statement of (ii) can be verified from the order of the composition $\prod_{q \in \Pi}\varphi_q$ and 
the observation that $\varphi_q$ changes only the entry $x_q$ of $(x_p)_{p \in \Pi}$. 

Finally, we prove (iii). By the definition of $\calO(\Pi)$ and the order of the composition $\prod_{q \in \Pi}\varphi_q$, for each intermediate step, 
the image of each vertex satisfies one of the following: 
for $p,p' \in \Pi$ such that $p'$ is covered by $p$, $x_p=x_{p'}=1$ or $x_p=1, x_{p'}=0$ or $x_p=x_{p'}=0$. 
This implies that the image of each vertex is always a $(0,1)$-vector. Hence, the convexity follows. 
\end{proof}

\bigskip

\section{Remarks on generalizations of two poset polytopes}\label{sec:general}

In this section, we mention some generalizations of Theorem~\ref{main} for two kinds of generalized poset polytopes. 
One is \textit{marked order polytopes} and \textit{marked chain polytopes}, and another one is \textit{enriched order polytopes} and \textit{enriched chain polytopes}. 
We will not give a precise proof in this paper, but the proof for the marked version for some certain marked posets will be given in the forthcoming paper \cite{FH}. 

\subsection{Marked poset polytopes}

Ardila--Bliem--Salazar \cite{ABS} introduced the ``marked version'' of two poset polytopes. 
Let us recall the notions of marked order polytopes and marked chain polytopes. 
Let $\widetilde{\Pi}$ be a poset equipped with a partial order $\prec$ and let $A \subseteq \widetilde{\Pi}$ be a subset of $\widetilde{\Pi}$ 
such that $A$ contains all minimal elements and maximal elements in $\widetilde{\Pi}$. 
Let $\lambda=(\lambda_a)_{a \in A} \in \RR^A$ be a vector such that $\lambda_a \leq \lambda_b$ whenever $a \prec b$ in $\widetilde{\Pi}$. 
We call the triple $(\widetilde{\Pi},A,\lambda)$ a \textit{marked poset}. 
In \cite[Definition 1.2]{ABS}, the \textit{marked order polytope} $\calO(\widetilde{\Pi},A,\lambda)$ and the \textit{marked chain polytope} $\calC(\widetilde{\Pi},A,\lambda)$ 
of a marked poset $(\widetilde{\Pi},A,\lambda)$ are defined as follows: 
\begin{align*}
&\calO(\widetilde{\Pi},A,\lambda):=\{(x_p)_{p \in \widetilde{\Pi} \setminus A} \in \RR^{\widetilde{\Pi} \setminus A} \mid 
x_p \leq x_q \text{ if }p \prec q, \; \lambda_a \leq x_p \text{ if }a \prec p, \; x_p \leq \lambda_a \text{ if }p \prec a\}. \\
&\calC(\widetilde{\Pi},A,\lambda):=\{(x_p)_{p \in \widetilde{\Pi} \setminus A} \in \RR^{\widetilde{\Pi} \setminus A} \mid \\
&\quad\quad\quad\quad\quad\quad\quad\quad\;
\sum_{i=1}^kx_{p_i} \leq \lambda_b-\lambda_a \text{ for }a \prec p_{i_1} \prec \cdots \prec p_{i_k} \prec b, \; 
x_p \geq 0 \text{ for }\forall p \in \widetilde{\Pi} \setminus A\}. 
\end{align*}
Note that for a poset $\Pi$, by setting $\widetilde{\Pi}=\Pi \cup \{\hat{0},\hat{1}\}$, 
where $\hat{0}$ (resp. $\hat{1}$) is the new minimal (resp. maximal) element not belonging to $\Pi$, 
and $A=\{\hat{0},\hat{1}\}$ and $\lambda=(\lambda_{\hat{0}},\lambda_{\hat{1}})=(0,1)$, 
we see that the marked order polytope $\calO(\widetilde{\Pi},A,\lambda)$ (resp. $\calC(\widetilde{\Pi},A,\lambda)$) is nothing but 
the ordinary order polytope $\calO(\Pi)$ (resp. the ordinary chain polytope $\calC(\Pi)$). 

In \cite[Theorem 3.4]{ABS}, the transfer map $\widetilde{\phi}$ is provided between marked order polytopes and marked chain polytopes, 
which is a natural generalization of the transfer map $\phi$ for ordinary poset polytopes. 
Hence, we expect a similar result to Theorem~\ref{main} on marked poset polytopes. 
However, 
there is an example of a marked poset such that its transfer map $\widetilde{\phi}$ can never be piecewise-linear even if we apply any parallel translation. 

On the other hand, for some certain class of marked posets, 
we can obtain the similar result to Theorem~\ref{main} for marked poset polytopes. 
In the forthcoming paper \cite{FH}, it will be precisely proved that 
Gelfand-Tsetlin polytope, which is a special kind of marked order polytopes, and Feigin-Fourier-Littelmann-Vinberg polytope, which is a special kind of marked chain polytopes, 
are combinatorially mutation-equivalent in $M_\RR$ after certain parallel translation. 

\subsection{Enriched poset polytopes}

Recently, Ohsugi--Tsuchiya introduced the \textit{enriched} version of two poset polytopes. 
Let us recall the notions of enriched order polytopes \cite{OH2} and enriched chain polytopes \cite{OH1}. 
For a subset $X \subseteq \Pi$ and $\varepsilon=(\varepsilon_p)_{p \in \Pi} \in \{\pm 1\}^\Pi$, let $\eb_X:=\sum_{p \in X}\eb_p$ and 
let $\eb_X^\varepsilon:=\sum_{p \in X}\varepsilon_p\eb_p$. 
For a poset filter $\alpha$ in $\Pi$, let $\alpha_\text{min}$ be the set of minimal elements of $\alpha$ 
and let $\alpha_\text{min}^c=\alpha \setminus \alpha_\text{min}$.

The \textit{enriched order polytope} \cite{OH2} and the \textit{enriched chain polytope} \cite{OH1} are defined as follows: 
\begin{align*}
&\calO^{(e)}(\Pi):=\conv(\{\eb_{\alpha_\text{min}}^\varepsilon+\eb_{\alpha^c_\text{min}} \mid \alpha \in \calJ(\Pi), \varepsilon \in \{\pm 1\}^\Pi\}), \\
&\calC^{(e)}(\Pi):=\conv(\{\eb_A^\varepsilon \mid A \in \calA(\Pi), \varepsilon \in \{\pm 1\}^\Pi\}), 
\end{align*}
where $\calJ(\Pi)$ (resp. $\calA(\Pi)$) denotes the set of all poset filters (resp. antichains) of $\Pi$. 

By Soichi Okada \cite{Okada}, it is confirmed that there is a transfer map between enriched order polytopes and enriched chain polytopes. 
We can prove that the enriched version of the transfer map for enriched poset polytopes can be also described as the composition of $\varphi_{w,F}$'s 
for some certain $w$'s and $F$'s. Namely, $\calO^{(e)}(\Pi)$ and $\calC^{(e)}(\Pi)$ are combinatorially mutation-equivalent in $M_\RR$. 

\bigskip
\bigskip

\end{document}